\documentclass[11pt, reqno]{amsart}
\usepackage{hyperref}
\usepackage{amssymb,latexsym,amsmath,amsfonts}
\usepackage{mathrsfs}
\usepackage{graphicx}

\numberwithin{equation}{section}

\theoremstyle{definition}
\newtheorem{definition}{Definition}[section]

\theoremstyle{remark}

\theoremstyle{plain}
\newtheorem{theorem}[definition]{Theorem}
\newtheorem{result}[definition]{Result}
\newtheorem{lemma}[definition]{Lemma}
\newtheorem{proposition}[definition]{Proposition}

\newtheorem{corollary}[definition]{Corollary}

\makeatletter
\def\@setsubjclass{%
  \ifx\@empty\@subjclass\else
    \vspace{6pt}%
    \noindent\textit{2020 Mathematics Subject Classification.}%
    \enspace\@subjclass\par
  \fi
}
\makeatother

\begin{document}

\title{Avoidance Criteria for Normality of Quasiregular Mappings}

\author{Gopal Datt}
\address{Department of Mathematics, Babasaheb Bhimrao Ambedkar University,
Lucknow, India}
\email{ggopal.datt@gmail.com, gopal.du@gmail.com}

\author{Kushal Lalwani}
\address{Manav Rachna University, Faridabad, Haryana}
\email{kushallalwani@mru.edu.in}

\author{Ashish Kumar Trivedi}
\address{ Department of Mathematics, University of Delhi, Delhi 110007}
\email{trivediashish2016@gmail.com, aktrivedi@maths.du.ac.in}

\keywords{normal families, quasiregular mappings, normal mappings}
\subjclass[2020]{Primary: 30D45, Secondary: 30C65}
\begin{abstract}
Peter Lappan in~\cite{Lappan2003} proved that for each $n\in \mathbb{N}=\{1,2,3,\dots\}$,
let $f_{1,n}, f_{2,n}$ and $f_{3,n}$ be three continuous functions
on $\mathbb{D}:=\{z\in \mathbb{C} : |z| < 1\}$ such that for
each $j=1,2,3,$ the sequence $(f_{j,n})$
converges locally uniformly to a function $f_j$ on $\mathbb{D}$.
Suppose that the three functions $f_1, f_2,$ and $f_3$ avoid each other on $\mathbb{D}$.
Let $\mathcal{F} =(g_n)$ be a sequence of meromorphic functions in $\mathbb{D}$
with the property that for each $n$,
the four functions $g_n, f_{1,n}, f_{2,n},$ and $f_{3,n}$
avoid each other, then $\mathcal{F}$ is normal. We present here an
analogue of this result in the setting of quasiregular mappings. We also
obtain analogues of  a few other results by Peter Lappan in~\cite{Lappan2003}
to quasiregular setting in the Euclidean space $\mathbb{R}^n$ for normal families
and normal quasiregular mappings.
\end{abstract}
\maketitle

\section{Introduction}\label{S:intro}
The genesis of the concept of a normal family is traceable
to the Bolzano\,-\,Weierstrass property, which guarantees that
every infinite bounded set of points possesses a limit
point in the Euclidean space. Its functional analogue was progressively developed at
the turn of the twentieth century by Ascoli, Arzela and Vitali,
who demonstrated that compactness for families of continuous
functions is equivalent to equicontinuity together with
uniform boundedness. Within this framework, Montel
in $1907$~\cite{Montel1927} inaugurated the systematic study of normal families
of analytic functions.
The theory of normal family in planar domain is already very rich, interested
readers may refer to the monograph by Schiff~\cite{Schiff1993} for
detailed study.\smallskip

Quasiregular mappings provide a natural framework for
extending geometric function theory from the complex plane
to the Euclidean space $\mathbb{R}^n$, with $n \geq 3$.
In higher dimensions, Liouville's Theorem asserts that
the only conformal mappings are M\"obius transformations
(see, \cite{Gehring1962, Reshetnyak1967}).
Consequently, the conformal category is too restrictive and it
becomes necessary to allow mappings with controlled distortion
in order to obtain a richer theory. Quasiregular mappings supply
this generalization and many classical results from complex
analysis have higher-dimensional analogues in this setting,
for a comprehensive introduction to the subject, we refer
the readers to Rickman's monograph~\cite{Rickman1993} and book of
Vuorinen~\cite{Vuorinen1988}.\smallskip

Recently, there has been significant work in the
study of quasiregular mappings from the perspective of normal
family theory
(see~\cite{FletcherNicks2024,Beardon2014,Makhmutov2007,FletcherHahn2026}).
The present paper is also motivated by developments in this direction.
The avoidance criteria are not new in the theory of normal
families, one of the classical results states if $\mathcal{F}$ is a family of
meromorphic functions such that each function $f \in \mathcal{F}$ omits
the same three distinct values say $\alpha, \beta,$ and $\gamma$ in $\mathbb{C}$,
then $\mathcal{F}$ is normal family, formally known as Montel's Theorem.
It is also valid if these values are replaced by continuous functions, provided
they mutually avoid each other.\smallskip

Let us recall the following definition: The functions $f$ and $g$ \textbf{avoid}
each other if $f(z) \neq g(z)$ for each point $z$ in their common domain.
The functions \( f \) and \( g \) \textbf{avoid} each other \textbf{uniformly} if
there exists \( \delta > 0 \) such that the spherical distance between
\( f(z) \) and \( g(z) \) is greater than or equal to \( \delta \) for
each point \( z \) in their common domain.
Bargmann et al.~\cite{BBHM} established
the following avoidance criterion for normality in the terms of continuous functions:
\begin{result}\cite[Theorem 1.2]{BBHM}\label{BBHM}
Let $n \ge 2$ be a positive integer and $K \in [1,\infty)$.
Then there exists an integer
$q_0 = q_0(n,K) > 0$ such that the following holds:

Let $\mathcal{F}$ be a family of $K$-quasimeromorphic mappings defined
on a domain $D \subset \mathbb{R}^n$. Suppose there exist continuous
functions
\[
h_1, \dots, h_{q_0} : D \to \mathbb{R}^n
\]
whose graphs are pairwise disjoint and such that for every
$f \in \mathcal{F}$ and every $x \in D$,
\[
f(x) \neq h_j(x) \quad \text{for all } j = 1, \dots, q_{0}.
\]
Then the family $\mathcal{F}$ is normal.
\end{result}

The term $K$-quasiregular mappings and the notion of normality will be defined
in \S~\ref{S:Prelims}. We also recall, in \S~\ref{S:Prelims}
the formal definition of the quasiregular mapping
and some of the preliminary
definitions and results which we are going to use while proving our main results
Theorem~\ref{T: QR Rouche's-type} and Theorem~\ref{T: QR BBHM Extension}. \smallskip

Following the Result~\ref{BBHM},
Lappan in \cite{Lappan2003} proved further extension
of this result for the family of meromorphic functions in the unit disc.
In order to extend this result Lappan proved a Rouche's-type
Theorem (see Result~\ref{T:Rouche's-type}), which
roughly speaks, if $f$ is analytic and $g$ is continuous and bounded on
the closed disc in a complex plane and if on the boundary of the disc, the magnitude
of $f$ is strictly greater than $g$, then whenever $f$ has a
zero inside the disc, $f$ and $g$ must coincide at some point inside the disc.
In \S~\ref{S:Rouche-type}, we prove an analogue of this Rouche's-type theorem
to the Euclidean space $\mathbb{R}^n$, $n \geq 2$, in the setting
of quasiregular mappings (see Theorem~\ref{T: QR Rouche's-type}).\smallskip

We end this section by roughly stating our main result,
Theorem~\ref{T: QR BBHM Extension}, which says the following:
Consider $q_0 = q_0(n,K)$ sequences of continuous functions on a domain
$D \subset \mathbb{R}^n$, say $(f_{j,n})$, $j = 1,\dots,q_0$,
that converge locally uniformly to limit functions $f_1,\dots,f_{q_0}$,
and assume that these limit functions are pairwise distinct at every point of $D$.
If $\mathcal{F} = (g_n)$ is a sequence of $K$-quasimeromorphic
mappings such that, for each $n$ and every point in $D$,
the functions $g_n, f_{1,n}, \dots, f_{q_0,n}$ all take mutually different values,
then this avoidance condition forces $\mathcal{F}$ to be normal.
Theorem~\ref{T: QR BBHM Extension} is the analogue of
Result~\ref{T: BBHM Extension} by Lappan in~\cite{Lappan2003}
by replacing the meromorphic functions
by quasimeromorphic mappings and closely following the proof and techniques
of Lappan~\cite[Theorem 2]{Lappan2003}. We will end this paper by
proving a version of Result~\ref{BBHM} for normal quasiregular mappings
(see \S~\ref{S: ACFQM}),
by using Theorem~\ref{T: QR BBHM Extension}.
Let us now move to next section,
where we recall some of the basic definitions
and known results that will be used throughout the paper.

\section{Preliminaries}\label{S:Prelims}
\textbf{Notation.} Throughout this paper, we
use $\mathbb{R}^n$, $n \geq 2$ to represent the
Euclidean $n$-space, $\mathbb{S}^n$ for the unit sphere in $\mathbb{R}^{n+1}$,
identified with $\overline{\mathbb{R}^n}=\mathbb{R}^n \cup {\infty}$.\smallskip

We begin with a brief introduction to quasiregular mappings and
mention some of the important properties below.
They were initially formulated and investigated by
Yu.~G.~Reshetnyak in a sequence of papers beginning in $1966$.
In the following years, his work was significantly advanced and
expanded by O.~Martio, S.~Rickman and J.~V\"ais\"al\"a.

\begin{definition}\label{D:Quasiregular Mapping}\cite[\S 3.1]{FletcherNicks2024}
Let $D \subset \mathbb{R}^n$, for $n\geq2$, be a domain, a \textbf{quasiregular mapping}
is a continuous mapping in the Sobolev space $W^{1, n}_{\text{loc}}(D)$
with a uniform bound on the distortion, which means that there
exists $K \geq 1$ such that
\begin{equation}\label{E: Main Inequality}
    |f'(x)|^n \leq K J_f(x)
\end{equation}
almost everywhere in $D$, where $|f'(x)|$ is the operator norm of $f'(x)$ and $J_f(x)$ is
the Jacobian determinant of $f$ at $x$.
\end{definition}
The minimal $K$ for which the inequality~\ref{E: Main Inequality} holds is called
the \emph{outer dilatation}, denoted by $K_O(f)$.
As a consequence of this there also exists
$K \geq 1$ such that
\begin{equation}\label{E: Derived Inequality}
    J_f(x) \leq K \inf_{|h|=1} |f'(x) h|^n
\end{equation}
holds almost everywhere in $D$. The minimal $K$ for which the
inequality~\ref{E: Derived Inequality}
holds is called the \emph{inner dilatation}, denoted by \textbf{$K_I(f)$}.
Next we have that
\[
    K(f) := \max(K_O(f), K_I(f))
\]
is the \emph{maximal dilatation} of $f$.\smallskip

In order to treat quasiregular mappings that are defined at the point at infinity,
or those that possess a discrete set of poles, one may pre or post compose
with an appropriate M\"obius transformation $A$
(typically chosen to be a spherical isometry) that maps the
point at infinity to the origin. This reduction allows
the standard local quasiregularity condition to be
applied in a neighbourhood of the image point.
Although such mappings are referred to
in the literature as quasimeromorphic mappings.\smallskip

Now we recall the definition of conformal metric in order to define the
normal families of quasiregular mappings.
\begin{definition}\label{D:Conformal metric}\cite[\S 2]{Beardon2014}
Let $X$ be a domain in \( \mathbb{S}^n \), for $n\geq2$.
A continuous form \( \tau(x) \)
such that \( \tau(x)|dx| \) is strictly positive called a
\textbf{conformal metric} on \( X \).
A distance function induced by a conformal metric as follows:
\[
d(u, v) = \inf_{\gamma} \int_{\gamma} \tau(x) \, |dx|
\]
where the infimum is taken over all locally rectifiable
paths \( \gamma \) joining \(
u \) to \( v \) in \( X \).
\end{definition}

Let us recall that $C(X,Y)$ denotes the collection of continuous
mappings $f : X \to Y$, where $X$ and $Y$ are subdomains of $\mathbb{S}^n$ equipped
with distance functions $d_X$ and $d_Y$ respectively arising from conformal
metrics, where continuity is with respect to these metrics.
The space $C(X,Y)$ is endowed with a distance function called the topology of
uniform convergence on compact subsets
(c.f. \cite[\S3]{Beardon2014}).

\begin{definition}\cite[Definition 3.2]{FletcherNicks2024}\label{D:K-QR mapping}
Suppose that $X$ and $Y$ are subdomains of $\mathbb{S}^n$ equipped with
conformal metrics, then
$Q(X,Y)$ the subset of $C(X,Y)$ denotes the collection
of quasiregular mappings from $X$ to $Y$.
Moreover, for $K \geq 1$, $Q_K(X,Y) \subset Q(X,Y)$
denotes the collection of $K$-quasiregular mappings from $X$ to $Y$.
\end{definition}
Reshetnyak in \cite{Reshetnyak} proved:
If $f_j : G \subset{\mathbb{R}^n} \to \mathbb{R}^n$, $j = 1,2,\ldots,$
be a sequence of $K$-quasiregular mappings converges locally uniformly
to a mapping $f$, then $f$ is also $K$-quasiregular. We can extend this result
which include point of infinity in domain or range using M\"obius maps.
One can easily see that absence of such a uniform bound on the maximal dilatation
of the mappings in the family does not always lead to
quasiregular limit function.\smallskip

Next follows the definition of normal family in the setting
of quasiregular mappings, stated by Fletcher and Nicks in \cite{FletcherNicks2024}.
\begin{definition}\label{D:qr normal}\cite[Definition 3.3]{FletcherNicks2024}
Suppose that $X, Y$ and $Z$ with $Y \subset Z$ are subdomains
in $\mathbb{S}^n$ for $n\geq 2$
equipped with conformal metrics, let $K\geq1$ and let $\mathcal{F} \subseteq Q_{K}(X ,
Y)$, then $\mathcal{F}$ is a \textbf{normal family} if any
of the following equivalent statements hold:
  \begin{itemize}
    \item[$(i)$] $\mathcal{F}$ is a normal family relative to $\mathbb{S}^{n}$.
      Recall that the family $\mathcal{F}
     \subset C(X,Y)$ is a normal family
     relative to $Z$ if $\mathcal{F}$ is relatively compact in
     $C(X,Z)$ and the closure of $\mathcal{F}$ in
     $C(X,Z)$ has to be the closure of $\mathcal{F}$ in
     $C(X,Y)$ along with possible constant maps into boundary
     of $Y$;
    \item[$(ii)$]  $\mathcal{F}$ is relatively compact in $C(X ,
     \mathbb{S}^{n})$;
    \item[$(iii)$] for every sequence $(f_n)$ in $\mathcal{F}$ there exists
      a subsequence
     that converges locally uniformly on $X$ in the
     spherical metric to a function $f : X\rightarrow \mathbb{S}^n$.
  \end{itemize}
\end{definition}
In Definition~\ref{D:qr normal}$(i)$, if we take $X \subset \mathbb{C}$
and $Y=\mathbb{C}$
with the Euclidean metric and $Z$ is the Riemann sphere with the spherical metric,
we get the definition of \emph{normal family of meromorphic functions}
in planar domain. \smallskip

In \S~\ref{S:Rouche-type}, we prove some results for the quasiregular
mappings, for which we recall some pre-known definitions here. To begin with,
we define topological degree of mapping as
in~\cite[Ch.1, \S 4.2, p.16]{Rickman1993} as follows:\smallskip

Suppose that $f : \mathbb{S}^n \to \mathbb{S}^n$, $n \ge 1$, is a continuous mapping,
the \emph{degree} $\mu(f) \in \mathbb{Z}$ of $f$ is given by
\[
f_*(\alpha) = \mu(f)\alpha
\]
where
\[
f_* : H_n(\mathbb{S}^n) \to H_n(\mathbb{S}^n)
\]
is the induced mapping on the singular $n$-dimensional homology group
$H_n(\mathbb{S}^n)$ which is isomorphic to $\mathbb{Z}$.\smallskip

Suppose that $f : X \to \mathbb{S}^n$ is a continuous mapping on a set
$X\subset \mathbb{S}^n$ and
$U \subset X$ be open in $\mathbb{S}^n$ with $U \subset\subset X$, if
$y \in \mathbb{S}^n$ be a point with $y \notin f(\partial U)$,
then such a point $y$ is called \textbf{$(f,U)$-admissible}.
We now define the local degree or topological index
$\mu(y,f,U) \in \mathbb{Z}$ of $f$ at $y$ with respect to $U$ as follows.

Let us consider the following sequence of induced mappings on homology of pairs:
\[
H_n(\mathbb{S}^n)
\xrightarrow{\, j_* \,}
H_n\big(\mathbb{S}^n, \mathbb{S}^n \setminus (U \cap f^{-1}(y))\big)
\xleftarrow{\, e_* \,}
H_n\big(U, U \setminus f^{-1}(y)\big)
\]
\[
\xrightarrow{\, f_1* \,}
H_n(\mathbb{S}^n, \mathbb{S}^n \setminus \{y\})
\xleftarrow{\, k_* \,}
H_n(\mathbb{S}^n).
\]

where $j$, $e$ and $k$ are inclusions and $f_1$ is defined by $f$.
Then $e_*$ is an isomorphism by excision.
Since $\mathbb{S}^n \setminus \{y\}$ is homologically trivial, therefore
$k_*$ is an isomorphism.
We get a homomorphism
\[
h = k_*^{-1} f_1* e_*^{-1} j_*
: H_n(\mathbb{S}^n) \to H_n(\mathbb{S}^n)
\]
and we define $\mu(y,f,U)\ \text{by}\ h(\alpha) = \mu(y,f,U)\alpha$.
Suppose that $f : \mathbb{S}^n \to \mathbb{S}^n$ be a continuous function
and if $f^{-1}(y) \subset U$, then
$\mu(y,f,U) = \mu(f)$.\smallskip

Interested readers may also refer to \cite{Deimling1985}
for further details of topological
degree or local topological degree.
Next with the help of local topological degree, the notion of the
sense-preserving continuous map is given as follows:

\begin{definition}\cite[Ch.1, \S 4, p.17]{Rickman1993}\label{D:sense}
Suppose that $D \subset \overline{\mathbb{R}}^n$ for $n \ge 2$ be a domain
and let  $f:D \to \overline{\mathbb{R}}^n$
continuous, we say that $f$ is \textit{sense-preserving} if
$\mu(y,f,U)>0$ for all domains $U \subset\subset G$
and $y \in f(U)\setminus f(\partial U)$.
\end{definition}

\section{Essential Lemmas}\label{S:Elemma}
The following lemma tells us about the geometric properties of
non-constant quasiregular mappings.
\begin{lemma}\cite[Ch.1, Theorem 4.1, Theorem 4.5]{Rickman1993}\label{L:qr-discrete}
A non-constant quasiregular mapping is discrete, open and sense-preserving.
\end{lemma}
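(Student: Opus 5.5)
The lemma is Reshetnyak's theorem: a non-constant quasiregular map $f:D\to\mathbb{R}^n$ is discrete, open and sense-preserving. I would follow Rickman's route. The first reduction uses the Sobolev structure. A quasiregular $f\in W^{1,n}_{\text{loc}}(D)$ is differentiable almost everywhere, satisfies Lusin's condition (N), and has $J_f\ge 0$ a.e. by \eqref{E: Main Inequality}. Moreover, for $f$ non-constant we get $J_f>0$ on a set of positive measure. I would then prove that each component $u=\langle f,e\rangle$ satisfies a degenerate elliptic equation, $\operatorname{div}A(x,\nabla u)=0$, with $n$-growth and ellipticity constants depending only on $K$. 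Setting $A$ via $\theta=J_f^{2/n}(f'^{T}f')^{-1}$, the $n$-harmonic-type structure follows from the Piola identity $\operatorname{div}(\operatorname{cof} f')=0$ for Sobolev maps.

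\textbf{Sense-preserving.} Fix $U\subset\subset D$ and $y\in f(U)\setminus f(\partial U)$. I would compute the local degree analytically via
\[
\mu(y,f,U)=\int_U \varphi(f(x))J_f(x)\,dx ,
\]
with $\varphi$ supported near $y$ in the component of $\mathbb{R}^n\setminus f(\partial U)$ containing $y$ and with $\int\varphi=1$. This formula is valid for $W^{1,n}$ maps by approximation, using the continuity of degree. Since $J_f\ge0$, we get $\mu\ge0$, and it remains to rule out $\mu=0$. If $\mu=0$, then $J_f=0$ a.e. on $f^{-1}(V)\cap U$ for a neighbourhood $V$ of $y$, and by \eqref{E: Main Inequality} $f'=0$ a.e. there. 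A Sobolev argument then shows $f$ is locally constant on that open set. Combined with $y\in f(U)$, this gives a contradiction once discreteness-type information is available. In practice I would therefore argue sense-preservation and light-ness together.

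\textbf{Discreteness (main obstacle).} First I show $f^{-1}(y)$ has empty interior, which is true since otherwise, by unique continuation for the equation satisfied by $\log|f-y|$ or by the components, $f$ would be constant. The deeper step is that $f^{-1}(y)$ is totally disconnected ("light"). The plan is to use that $u=\log|f(x)-y|$ is a supersolution-type function for an $\mathcal{A}$-harmonic equation. Then $f^{-1}(y)=\{u=-\infty\}$ has $n$-capacity zero, because it is a polar set of a nonconstant $\mathcal{A}$-superharmonic function. Sets of zero $n$-capacity are totally disconnected: a nondegenerate continuum has positive $n$-capacity relative to any neighbourhood. With lightness established, the Chernavskii–Titus–Young theorem on light sense-preserving (or merely light and degree-nonzero) maps gives discreteness and openness.

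The hardest step is the capacity-zero argument. It requires the Caccioppoli estimates for $\log|f-y|$ together with the modulus inequality $M(\Gamma)\le K_I\,M(f\Gamma)$, or equivalently the Harnack machinery of Rickman Ch.~VI/VII. I would cite that machinery rather than reprove it. In the paper itself, of course, the lemma is simply quoted from \cite{Rickman1993}.
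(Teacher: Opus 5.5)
Your architecture (pullback $\mathcal{A}$-harmonic structure via the Piola identity, polar fibres, the analytic degree formula, then Titus--Young) is the Reshetnyak argument that the paper simply cites from Rickman's book. However, one load-bearing step fails as you justify it. You obtain that $f^{-1}(y)$ has empty interior from ``unique continuation'' for $\operatorname{div}\mathcal{A}(x,\nabla u)=0$, and no such principle is available. The coefficient matrix $\theta=J_f^{2/n}\bigl((f')^{T}f'\bigr)^{-1}$ is merely bounded and measurable. Unique continuation already fails for linear uniformly elliptic equations with H\"older continuous coefficients in dimension three (Pli\'s, Miller), and for $n\ge 3$ it is an open problem even for the $n$-Laplacian itself. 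For $\log|f-y|$ the principle does not even apply, since that function solves the equation only off the fibre. Your proof of sense-preservation rests exactly on this fact: there $\mu(y,f,U)=0$ forces $J_f=0$ a.e.\ on $f^{-1}(V)\cap U$, hence $f$ is constant on some nonempty open set, and you need that to be impossible. As written, that step is unsupported.

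The repair is to reverse the order and obtain the empty interior from the capacity argument, which itself needs a step you did not supply. A priori $u=\log|f-y|$ is $\mathcal{A}$-harmonic only in $D\setminus f^{-1}(y)$. It is also a subsolution, not a supersolution; equivalently, $-u$ is $\mathcal{A}$-superharmonic.

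To cross the fibre, truncate. The function $u_t=\max(u,t)$ equals the constant $t$ on the open neighbourhood $\{|f-y|<e^{t}\}$ of the closed set $f^{-1}(y)$. On $D\setminus f^{-1}(y)$ it is a maximum of two $\mathcal{A}$-harmonic functions. By locality, $u_t$ is $\mathcal{A}$-subharmonic in $D$. As $t\to-\infty$, $u_t$ decreases to $u$. A decreasing limit of $\mathcal{A}$-subharmonic functions on the domain $D$ is either $\mathcal{A}$-subharmonic or identically $-\infty$, and the latter is excluded because $f\not\equiv y$.

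Hence $f^{-1}(y)=\{u=-\infty\}$ is polar, i.e.\ of $n$-capacity zero. This gives Lebesgue measure zero, so empty interior, which closes your degree argument. It also gives lightness, after which Titus--Young finishes.

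Finally, drop the modulus-inequality alternative. The inequality with $M(\Gamma)$ on the left holds only with $K_O(f)$ and a multiplicity factor, which is useless before discreteness is known. Poletsky's $M(f\Gamma)\le K_I(f)M(\Gamma)$ is established in Rickman only after, and by means of, discreteness and openness, so that route is circular. The non-circular input is the Caccioppoli/Harnack theory for $\mathcal{A}$-harmonic functions.
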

\noindent{Here, by a discrete quasiregular mappings we mean that the
preimage of every point consists only of isolated points.}\smallskip

Next follows the Miniowitz's version of
Zalcmann's Lemma for quasiregular mapping, which characterizes the non-normality.
Roughly speaking, if a family of
quasiregular mappings is not normal at a point, then by scaling in
appropriately around that point, one can extract a sequence that converges
to a non-constant quasimeromorphic mapping defined on the whole space.

\begin{lemma}\label{L:QR zalcmann}\cite[Lemma 1]{Miniowitz}
Let $\mathcal{F}$ be the family of K-quasiregular mappings in the
unit ball $\mathbb{B}^n \subset \mathbb{R}^n$ then $\mathcal{F}$
is not normal  at $x_0 \in \mathbb{B}$ if and only if there exists,
\begin{itemize}
  \item [$(i)$] $0<r<1$,
  \item [$(ii)$] sequence of points $(x_n) \in \mathbb{B}^n$
  such that $x_n \rightarrow x_0$,
  \item [$(iii)$]sequence of mappings $(f_n) \in \mathcal{F}$,
  \item [$(iv)$] sequence of positive reals $(\rho_n)$ such
                 that $\rho_n \rightarrow 0^{+}$,
\end{itemize}
such that, $f_n(x_n+\rho_{n} \xi) \rightarrow g(\xi)$ locally uniformly
on compact subsets on $\mathbb{R}^n$ where $g$ is a non-constant
quasimeromorphic mapping $g :\mathbb{R}^n \rightarrow \mathbb{S}^n$.
\end{lemma}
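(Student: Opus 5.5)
The plan is to prove the two implications separately. The backward direction is a short equicontinuity argument. The forward direction adapts Zalcman's rescaling. Since quasiregular mappings have no spherical derivative, I replace it by a scale-type quantity built from spherical diameters of images of balls. Throughout, $\sigma$ is the chordal metric on $\mathbb{S}^n$ and $B(x,t)$ is the Euclidean ball.

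\emph{Backward direction ("if").} Suppose the sequences in (i)--(iv) exist but $\mathcal{F}$ is normal at $x_0$. Pass to a subsequence with $f_n\to h$ locally uniformly in $\sigma$ on some ball $B(x_0,\delta)$. Then $(f_n)$ is $\sigma$-equicontinuous at $x_0$. For any compact $E\subset\mathbb{R}^n$, the points $x_n+\rho_n\xi$ with $\xi\in E$ converge uniformly to $x_0$, because $x_n\to x_0$ and $\rho_n\to 0$. Hence $f_n(x_n+\rho_n\xi)\to h(x_0)$ uniformly on $E$. This forces $g\equiv h(x_0)$, contradicting that $g$ is non-constant.

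\emph{Forward direction ("only if").} Fix $0<r<1$ with $\overline{B(x_0,r)}\subset\mathbb{B}^n$ and a small constant $c>0$. For a $K$-quasiregular $f$ and $|x-x_0|<r$, set
\[
s_f(x)=\sup\{t\le r-|x-x_0| : \operatorname{diam}_\sigma f(B(x,t))\le c\}.
\]
This is the analogue of $1/f^{\#}$. By continuity, $\operatorname{diam}_\sigma f(B(x,s_f(x)))=c$ whenever $s_f(x)<r-|x-x_0|$. Non-normality at $x_0$, combined with Arzel\`a--Ascoli, means $\mathcal{F}$ is not $\sigma$-equicontinuous on any neighbourhood of $x_0$. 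So there are $f_n\in\mathcal{F}$ and points $y_n\to x_0$ with $s_{f_n}(y_n)\to0$. Consequently the quantity
\[
M_n=\sup_{|x-x_0|<r}\frac{r-|x-x_0|}{s_{f_n}(x)}
\]
tends to $\infty$; by shrinking $r$ at the start we can ensure the relevant points approach $x_0$.

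\emph{Choice of rescaling.} Choose $x_n$ with $(r-|x_n-x_0|)/s_{f_n}(x_n)\ge M_n/2$; using an almost-maximiser avoids any semicontinuity issues. Put $\rho_n=s_{f_n}(x_n)$, so $\rho_n\to0$, and $g_n(\xi)=f_n(x_n+\rho_n\xi)$. Then $g_n$ is $K$-quasiregular on $|\xi|<(r-|x_n-x_0|)/\rho_n\to\infty$. The near-maximality, exactly as in Zalcman's argument, gives
\[
s_{g_n}(\xi)=s_{f_n}(x_n+\rho_n\xi)/\rho_n\ge \tfrac12\cdot\frac{r-|x_n-x_0|-\rho_n|\xi|}{r-|x_n-x_0|}\ge\tfrac14
\]
uniformly for $\xi$ in any fixed compact set and $n$ large. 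Thus every ball $B(\xi,1/4)$ is mapped by $g_n$ into a set of spherical diameter at most $c$, which after a spherical rotation lies in a bounded region of $\mathbb{R}^n$.

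\emph{Main obstacle: equicontinuity of $(g_n)$.} I would obtain it from the standard fact that a family of uniformly bounded $K$-quasiregular mappings is locally equicontinuous. This follows, for instance, from the H\"older-type estimate $|f(a)-f(b)|\le C\,M\,|a-b|^{K^{1/(1-n)}}$ for $|f|\le M$ (Rickman Ch.~III), or from the Montel-type theorem for families omitting a fixed cap. The delicate point is that the rotations vary with $n$ and $\xi$. Since they are spherical isometries, this is harmless: the estimate is applied on each small ball after composing with an isometry, and finitely many balls cover a compact set. Arzel\`a--Ascoli and a diagonal argument then give a subsequence with $g_n\to g$ locally uniformly on $\mathbb{R}^n$. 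By Reshetnyak's theorem, extended to $\mathbb{S}^n$ via M\"obius maps, $g$ is $K$-quasimeromorphic.

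\emph{Non-constancy.} Finally, $\operatorname{diam}_\sigma g_n(\overline{B(0,1)})\ge\operatorname{diam}_\sigma f_n(B(x_n,\rho_n))=c$, since $\rho_n<r-|x_n-x_0|$ for large $n$. Uniform convergence on $\overline{B(0,1)}$ passes this bound to $g$, so $g$ is non-constant.
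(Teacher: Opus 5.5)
The paper gives no proof of this lemma: it quotes Miniowitz and cites Iwaniec--Martin for arranging $x_n\to x_0$. Your argument therefore has to stand on its own, and it has one genuine gap and one imprecision. Your backward direction is correct. So is most of the rescaling machinery: finiteness of $M_n$, the near-maximiser bound, equicontinuity of $(g_n)$ via rotated uniformly bounded $K$-quasiregular maps, Reshetnyak for the limit, and non-constancy from $\operatorname{diam}_\sigma=c$.

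The gap is the step ``non-normality plus Arzel\`a--Ascoli gives $s_{f_n}(y_n)\to 0$''. Arzel\`a--Ascoli only gives points $y$ near $x_0$ where equicontinuity fails with some threshold $\varepsilon_y>0$. That threshold depends on $y$, may tend to $0$, and may be smaller than the $c$ you fixed in advance. So you cannot conclude $\operatorname{diam}_\sigma f_n(B(y_n,t))>c$ for small $t$. For general continuous families the conclusion is in fact false: $f_k(x)=\tfrac{c}{8}(\sin kx_1,0,\dots,0)$ is nowhere equicontinuous, yet $s_{f_k}(x)=r-|x-x_0|$ for every $k$.

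What you need is the quasiregular substitute for Marty's theorem. Suppose $\operatorname{diam}_\sigma f(B(x_0,s_0))\le c$ for all $f\in\mathcal F$, with $c$ below the chordal diameter of $\mathbb{S}^n$. Composing each $f$ with a chordal rotation makes the family uniformly bounded and $K$-quasiregular on $B(x_0,s_0)$. Hence it is equicontinuous on $B(x_0,s_0/2)$, so $\mathcal F$ is normal at $x_0$. The contrapositive gives $\inf_{f\in\mathcal F}s_f(x_0)=0$, which is what you need. This uses the same equicontinuity fact you invoke later, so the repair is cheap. But it is exactly where quasiregularity enters on the non-normal side, and it cannot be credited to Arzel\`a--Ascoli.

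The imprecision is that $x_n\to x_0$ does not follow from ``shrinking $r$ at the start''. With $r$ fixed, the near-maximiser $x_n$ is only known to satisfy $|x_n-x_0|<r$. Instead, let $r=r_n\to 0$ vary with $n$, cap $s_f$ by $r_n-|x-x_0|$, and choose $f_n$ with $s_{f_n}(x_0)\le r_n/n$, which the previous paragraph allows. Then $M_n\ge n$, $|x_n-x_0|<r_n\to 0$, and $\rho_n\le 2r_n/M_n\to 0$. Your estimate $s_{g_n}(\xi)\ge \tfrac12\bigl(1-2|\xi|/M_n\bigr)$ goes through unchanged, as does the rest of your argument.
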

\textbf{Remark.} Miniowitz's statement does not require
that the sequence of points $x_n \to x_0$, but this can be ensured easily
(see \cite[19.7.3]{IwaniecMartin2001}).\smallskip

In the classical theory of holomorphic functions in the complex plane,
Picard's theorem describes the value distribution of entire functions.
It states that if $f$ is a non-constant entire function on $\mathbb{C}$,
then $f$ assumes every complex value with at most one exception.
This result plays a fundamental role in complex analysis.
Next we state the quasiregular analogue of Picard's theorem.

\begin{lemma}\label{L:Picard}\cite[Ch.1, Theorem 2.1]{Rickman1993}
Let $n \ge 2$ and $K \ge 1$, then there exists a
$q_0 = q_0(n,K) \in \mathbb{N}$,  which depends only on $n$ and $K$ such that if
\[
f : \mathbb{R}^n \to \overline{\mathbb{R}}^n \setminus \{a_1,\dots,a_q\}
\]
is a $K$-quasimeromorphic mapping, then $f$ is constant
whenever $q \ge q_0$ and $a_1,\dots,a_q$ are distinct
points in $\overline{\mathbb{R}^n}$.
\end{lemma}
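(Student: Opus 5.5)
Since this is Rickman's Picard theorem, in the paper it is enough to cite \cite[Ch.1, Theorem 2.1]{Rickman1993}. If we want a self-contained argument, I would follow J.~Lewis's potential-theoretic proof, which avoids most of Rickman's path-family machinery. This is a plan, not a checked argument; the constants $c_1, c_2, \dots$ below stand for quantities depending only on $n$ and $K$, and I have not computed them.

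\textbf{Setup.} Argue by contradiction. Suppose $f:\mathbb{R}^n\to\overline{\mathbb{R}^n}\setminus\{a_1,\dots,a_q\}$ is non-constant and $K$-quasimeromorphic. After post-composing with a spherical isometry, we may assume the $a_j$ are finite and, by pigeonhole on $\mathbb{S}^n$, that a subfamily of size comparable to $q$ is $\delta$-separated. Here $\delta$ is allowed to shrink with $q$, but only in a controlled way.

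\textbf{Normalization at a good scale.} We want $f$ to be non-constant at a fixed scale with controlled spherical oscillation. I would use a rescaling argument in the spirit of Lemma~\ref{L:QR zalcmann}, applied to the family $\{f(x_0+R\,\cdot)\}$, which is not normal since $f$ is non-constant and entire. The rescaled limit is again $K$-quasimeromorphic and still omits the $a_j$, by Hurwitz-type arguments using Lemma~\ref{L:qr-discrete}. So we may replace $f$ by a map whose spherical derivative is bounded on $\mathbb{R}^n$ and equals $1$ at the origin.

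\textbf{Disjointly supported subsolutions.} For each $j$ set
\[
u_j=\max\{\log(\delta/|f-a_j|),0\}.
\]
The key fact is that $\log|f-a_j|$ is a weak solution of an $\mathcal{A}$-harmonic equation of type $n$, with structure constants depending only on $n$ and $K$; this is the quasiregular pullback of the $n$-harmonic function $\log|y-a_j|$. Hence each $u_j$ is a nonnegative $\mathcal{A}$-subsolution. Because the $a_j$ are $\delta$-separated, the sets $\{u_j>0\}$ are pairwise disjoint. Since $f$ is open and its image covers, for each $j$, points near $a_j$ on a fixed ball (by the derivative normalization), every $u_j$ is non-trivial on a ball of unit size.

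\textbf{Contradiction via growth.} The Harnack inequality and the capacity and oscillation estimates for $\mathcal{A}$-harmonic functions should show that each $u_j$ has growth rate $\log\max_{|x|=r}u_j\gtrsim c_1\log r$ on many dyadic scales. On the other hand, a Carleman/Fuglede-type energy argument for $q$ disjointly supported positive subsolutions (a quasiregular Phragm\'en--Lindel\"of/Denjoy--Carleman estimate) forces the sum of their growth exponents to be at most $c_2$. Comparing the two gives $q\le q_0(n,K)$.

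\textbf{Main obstacle.} The hard step is the last one. One needs a quantitative lower bound on the growth of each $u_j$ that is uniform in $j$ and survives the shrinking of $\delta$ with $q$, so that the resulting bound on $q$ really depends only on $n$ and $K$. I would attack it with a Harnack chain argument on the annuli where $|f|$ is comparable to its maximum, using the boundedness of the spherical derivative from the normalization step.
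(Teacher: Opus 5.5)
Citing Rickman's theorem is exactly what the paper does; it gives no argument beyond the reference, so your first sentence already matches the paper. The self-contained sketch you add, however, has a genuine gap in its decisive step.

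Carleman/Denjoy/Phragm\'en--Lindel\"of estimates for nonnegative subsolutions with pairwise disjoint supports give \emph{lower} bounds on growth, and these get stronger as $q$ increases. In the plane, Carleman's inequality yields $\sum_{j=1}^q\log\max_{|z|=r}u_j\ge c\,q^2\log r-O(1)$. Disjointness gives no \emph{upper} bound at all. For instance, restrict $(\operatorname{Re} z^N)^+$ to each of the $N$ components of $\{\operatorname{Re} z^N>0\}$ and extend by $0$. This gives $N$ disjointly supported subharmonic functions whose growth exponents sum to $N^2$. So the claim that the sum of growth exponents is at most $c_2$ is false. Lower bounds on each $u_j$, which is what your ``main obstacle'' paragraph is after, can never yield a contradiction.

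The roles must be reversed. The normalization has to supply an upper bound $\log\max_{|x|=r}u_j\le\beta\log r+O(1)$ with $\beta=\beta(n,K)$. A type-$n$ Carleman estimate, coming from a Poincar\'e inequality on small subsets of $\mathbb{S}^{n-1}$, has to give $\sum_j\log\max_{|x|=r}u_j\ge\Phi(q)\log r-O(1)$ with $\Phi(q)/q\to\infty$. Comparing, $q\beta\ge\Phi(q)$, which bounds $q$. The upper bound is the real content, and your plan does not provide it. Quasiregular maps have no pointwise spherical derivative, so the normalization can only be phrased as normality of the translates $\{f(x+\cdot)\}$. Converting that into polynomial growth of $\log^+(\delta/|f-a_j|)$ is precisely the hard part.

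Two smaller points. First, the pigeonhole step is false as stated: if all $a_j$ lie in a spherical ball of radius $\varepsilon$, no isometry separates them, and $\varepsilon$ is not controlled by $q$. It is also unnecessary, because growth exponents do not see $\delta$. Just take $\delta=\tfrac12\min_{j\ne k}|a_j-a_k|$, which is what disjointness of the sets $\{u_j>0\}$ actually requires; $\delta$-separation alone does not suffice. So the ``shrinking of $\delta$ with $q$'' is not an obstacle.

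Second, a derivative normalization does not force every $u_j$ to be nontrivial on a fixed unit ball. The map $e^{2z}$ has spherical derivative $2|e^{2z}|/(1+|e^{2z}|^2)\le1$, with equality at $0$, yet it maps the unit disc into $\{e^{-2}<|w|<e^{2}\}$. All you need is $u_j\not\equiv0$. This follows from Liouville's theorem applied to $f$ post-composed with a M\"obius map sending $a_j$ to $\infty$.
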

Hurwitz's theorem is another classical result concerning the behavior of zeros
under locally uniform convergence. It states that if $\{f_n\}$ is a sequence of
holomorphic functions on a domain $D \subset \mathbb{C}$ converging
locally uniformly to a holomorphic function $f$,
and if each $f_n$ has no zeros in $D$,
then either $f$ is identically zero or $f$ has no zeros in $D$.
We next state the corresponding version of Hurwitz's theorem for
quasiregular mappings.

\begin{lemma}\cite[Lemma 2]{Miniowitz}\label{L: Hurwitz}
Suppose that $D \subset \mathbb{R}^n$ for $n \geq 2$ be a domain and
let $f_m : D \to \mathbb{R}^n \setminus \{a\}$ be a sequence of
$K$-quasiregular mappings that converges locally uniformly on $D$
to a $K$-quasiregular mapping $f$. Then $f$ is either constant
or $f$ omits $a$ in $U$.
\end{lemma}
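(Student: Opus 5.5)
The plan is to argue by contradiction with local topological degree, the natural replacement for the argument principle used in the classical Hurwitz theorem. Suppose $f$ is not constant but $f(x_0)=a$ for some $x_0\in D$. We will produce a small ball around $x_0$ on which $a$ has positive local degree with respect to $f$. We then transfer this degree to $f_m$ for large $m$, which forces $f_m$ to take the value $a$ and contradicts the hypothesis.

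\emph{Step 1: isolating the zero of $f-a$.} By Lemma~\ref{L:qr-discrete}, the non-constant $K$-quasiregular mapping $f$ is discrete, open and sense-preserving. Discreteness gives a ball $U=B(x_0,r)$ with $\overline U\subset D$ and $f^{-1}(a)\cap\overline U=\{x_0\}$. In particular $a\notin f(\partial U)$, so $a$ is $(f,U)$-admissible. Since $\partial U$ is compact and $f$ is continuous with values in $\mathbb{R}^n$, we get
\[
\delta:=\min_{x\in\partial U}|f(x)-a|>0 .
\]
Because $a\in f(U)\setminus f(\partial U)$ and $f$ is sense-preserving (Definition~\ref{D:sense}), we have $\mu(a,f,U)>0$.

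\emph{Step 2: stability of the degree.} The convergence is locally uniform and $\overline U$ is compact. Moreover, $f(\overline U)$ is a bounded subset of $\mathbb{R}^n$, so spherical and Euclidean convergence agree there. Hence there is $m_0$ with $|f_m-f|<\delta$ on $\overline U$ for all $m\ge m_0$. For such $m$, consider the homotopy $H_t=(1-t)f+tf_m$, $t\in[0,1]$, on $\overline U$. For $x\in\partial U$,
\[
|H_t(x)-a|\ \ge\ |f(x)-a|-t|f_m(x)-f(x)|\ >\ \delta-\delta=0 ,
\]
so $a\notin H_t(\partial U)$ for every $t$. Homotopy invariance of the local degree then gives $\mu(a,f_m,U)=\mu(a,f,U)>0$. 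This invariance holds for homotopies through admissible maps (see \cite[Ch.~1, \S4]{Rickman1993} or \cite{Deimling1985}).

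\emph{Step 3: conclusion.} If $a\notin f_m(U)$, then $U\cap f_m^{-1}(a)=\emptyset$. The pair groups in the definition of $\mu$ then vanish, so $\mu(a,f_m,U)=0$. This contradicts Step 2, hence $a\in f_m(U)$, contradicting that $f_m$ omits $a$. Therefore either $f$ is constant or $f$ omits $a$ in $D$.

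The main obstacle is Step 2, namely justifying homotopy invariance of $\mu(\cdot,\cdot,U)$ for merely continuous admissible homotopies in the homological definition recalled in \S\ref{S:Prelims}. I would cite it from Rickman or Deimling rather than reprove it. The only other point needing care is that the convergence metric is spherical, which is handled by the boundedness of $f$ on $\overline U$.
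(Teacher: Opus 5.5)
Your proof is correct. Note that the paper gives no proof of this lemma; it simply quotes Miniowitz. So the useful comparison is with the machinery the paper develops itself.

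Your argument is the paper's Rouché-type theorem in disguise. Put $F=f-a$ and $g=f-f_m$ on $\overline U$. Your homotopy satisfies $(1-t)f+tf_m-a=F-tg$, which is exactly the homotopy $h(x,t)$ used in the proof of Theorem~\ref{T: QR Rouche's-type}.

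Hence the lemma follows in one line from Corollary~\ref{C: QR Rouche's-type}. Here $F$ is quasiregular with $F(x_0)=0$, and $g$ is continuous and bounded on $\overline U$. Your Step~2 estimate gives $|F|\ge\delta>|g|$ on $\partial U$. The conclusion $F(x')=g(x')$ for some $x'\in U$ says precisely that $f_m(x')=a$. There is no circularity: the paper's proof of Theorem~\ref{T: QR Rouche's-type} uses only Lemmas~\ref{L:qr-discrete}, \ref{L:Topological Degree} and~\ref{L: Cardinality}, not the Hurwitz lemma.

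The step you flagged as the main obstacle is already available in the paper. Homotopy invariance for admissible continuous homotopies is exactly Lemma~\ref{L:Topological Degree}$(iv)$. Step~3 is Lemma~\ref{L:Topological Degree}$(i)$, since $a\notin f_m(\overline U)$. In Step~1 you may equally get $\mu(a,f,U)\ge 1$ from Lemma~\ref{L: Cardinality} via $1\le N(a,f,U)\le\mu(a,f,U)$, as the paper does.

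Your route also shows that the hypothesis is stronger than needed. Only the limit $f$ must be discrete and sense-preserving, so that $\mu(a,f,U)>0$. The approximating maps $f_m$ need only be continuous, and their $K$-quasiregularity is never used.
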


The following lemma summarizes well-known properties of the local
degree, which we are going to use in section~\ref{S:Rouche-type}.
\begin{lemma}\label{L:Topological Degree}\cite[Ch.1, Proposition 4.4]{Rickman1993}
Suppose that $X \subset \mathbb{S}^n$ for $n \geq 2$ and
$U \subset X$ be open in $\mathbb{S}^n$ with $U \subset \subset X$.
Let $f : X \to \mathbb{S}^n$ be a continuous mapping of a set $X \subset \mathbb{S}^n$,
\textit{then local degree satisfies the following:}

\begin{enumerate}
\item[$(i)$] $\mu(y,f,U)=0$ if $y \notin f(\overline{U})$ and
            the mapping $y \mapsto \mu(y,f,U)$ is constant in each component of
            $\mathbb{S}^n \setminus f(\partial U)$.

\item[$(ii)$] $\mu(y,f,U)=1$ for $y \in f(U)$, if $f$ is injective,.

\item[$(iii)$] $\mu(y,f,U)=1$ for $y \in f(U)$, if $f$ is an inclusion.

\item[$(iv)$] Suppose that $f$ and $g$ are homotopic via a homotopy say $h_t$,
            $t \in [0,1]$, $h_0=f$, $h_1=g$.
            Further suppose that $y$ is $(h_t,U)$-admissible for all $t \in [0,1]$.
            Then $\mu(y,f,U)=\mu(y,g,U)$.

\end{enumerate}
\end{lemma}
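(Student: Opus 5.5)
The plan is to argue each item directly from the homological definition $h=k_*^{-1}f_{1*}e_*^{-1}j_*$, using only naturality of homology, excision and homotopy invariance of maps of pairs. Everything except the constancy claim in $(i)$ and the sign issue in $(ii)$ is bookkeeping of pairs. The step I expect to be the main obstacle is $(iv)$, because the preimage sets $U\cap h_t^{-1}(y)$ vary with $t$; the key device is a single compact set $C$ and neighbourhood $V$ that work for all $t$ simultaneously.

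\textbf{Item $(i)$.} If $y\notin f(\overline U)$, then $U\cap f^{-1}(y)=\emptyset$. Hence $j_*$ lands in $H_n(\mathbb{S}^n,\mathbb{S}^n)=0$, so $h=0$ and $\mu(y,f,U)=0$.

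For constancy on a component $W$ of $\mathbb{S}^n\setminus f(\partial U)$, I would first prove that the degree is unchanged under spherical isometries and then reduce to $(iv)$.
\begin{itemize}
\item \emph{Isometry invariance.} Let $\rho$ be a rotation of $\mathbb{S}^n$. Naturality of the diagram under $\rho$ gives $\mu(\rho(y),\rho\circ f,U)=\mu(f_\rho)\,\mu(y,f,U)$, where $f_\rho$ denotes $\rho$ regarded as a self-map of $\mathbb{S}^n$. Since $\rho$ is isotopic to the identity, $\mu(f_\rho)=1$.
\item \emph{Reduction to $(iv)$.} Given $y,y'\in W$, join them by a path $\gamma$ in $W$. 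Choose a continuous family of rotations $\rho_t$ with $\rho_0=\mathrm{id}$ and $\rho_t(y)=\gamma(t)$. Apply $(iv)$ to the homotopy $\rho_t^{-1}\circ f$, for which $y$ is admissible at every $t$ because $\gamma(t)\notin f(\partial U)$.
\item \emph{Conclusion.} This gives $\mu(y,f,U)=\mu(y,\rho_1^{-1}\circ f,U)$. By isometry invariance the right-hand side equals $\mu(y',f,U)$.
\end{itemize}
This avoids circularity as long as $(iv)$ is proved independently, as below.

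\textbf{Items $(ii)$ and $(iii)$.} For $(iii)$, $f$ is the inclusion and $f^{-1}(y)\cap U=\{y\}$. The composite $f_{1*}e_*^{-1}j_*$ is then induced by inclusions of pairs, which commute with $j$ and $k$, so $h=\mathrm{id}$ and $\mu=1$.

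For $(ii)$, $f$ is injective, so $U\cap f^{-1}(y)$ is a single point $x$. By invariance of domain, $f|_U$ is a homeomorphism onto the open set $f(U)$. Excising to a small ball $B\ni x$ shows that $f_{1*}:H_n(B,B\setminus\{x\})\to H_n(\mathbb{S}^n,\mathbb{S}^n\setminus\{y\})$ is an isomorphism of groups isomorphic to $\mathbb{Z}$, so $\mu=\pm1$. To obtain the value $+1$ one needs the orientation convention implicit in Rickman, namely that $f$ is sense-preserving, as in Definition~\ref{D:sense}. I would flag this explicitly. In the applications the maps are quasiregular, so by Lemma~\ref{L:qr-discrete} the sign is indeed $+1$.

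\textbf{Item $(iv)$.} Let $H:\overline U\times[0,1]\to\mathbb{S}^n$ be the homotopy with $H(\cdot,t)=h_t$.
\begin{itemize}
\item Set $C=\{x\in\overline U:\ y\in H(x,[0,1])\}$. This set is compact, and it lies in $U$ because $y$ is $(h_t,U)$-admissible for every $t$.
\item Choose an open $V$ with $C\subset V\subset\subset U$. For each $t$, excision replaces $U$ by $V$ in the definition of $\mu(y,h_t,U)$. Naturality also lets us replace $U\cap h_t^{-1}(y)$ by the fixed set $C$: the map $j_*$ to $H_n(\mathbb{S}^n,\mathbb{S}^n\setminus C)$ followed by restriction is compatible with the original maps.
\item Then every $h_t$ maps the fixed pair $(V,V\setminus C)$ into $(\mathbb{S}^n,\mathbb{S}^n\setminus\{y\})$, since $h_t^{-1}(y)\cap U\subset C$.
\item So $H$ is a homotopy of maps of pairs, and $h_{0*}=h_{1*}$ on $H_n(V,V\setminus C)$. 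Hence $\mu(y,f,U)=\mu(y,g,U)$.
\end{itemize}
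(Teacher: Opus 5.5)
The paper does not prove this lemma. It cites Rickman (Ch.~I, Prop.~4.4), and only items $(i)$ and $(iv)$ are used later, in the proof of Theorem~\ref{T: QR Rouche's-type}. Your proposal is a correct, self-contained derivation from the homological definition.

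Your treatment of $(iii)$ and $(iv)$ is the standard one. In $(iv)$, the compact set $C$ (the projection of $H^{-1}(y)$, which lies in $U$ by admissibility) together with the excision square for $U\cap h_t^{-1}(y)\subset C$ is exactly what makes $H$ a homotopy of maps of the fixed pair $(V,V\setminus C)$.

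You depart from the usual route in the constancy part of $(i)$. The common argument replaces $\{y\}$ by a small closed ball $\overline B\subset W$. It then uses that $H_n(\mathbb{S}^n,\mathbb{S}^n\setminus\overline B)\to H_n(\mathbb{S}^n,\mathbb{S}^n\setminus\{y'\})$ is an isomorphism for each $y'\in\overline B$, which gives local constancy directly. You instead reduce constancy to $(iv)$ plus rotation invariance. This is neat and non-circular, but it relies on a fact you leave unjustified: that $\gamma$ lifts to a continuous path $\rho_t$ in $SO(n+1)$ with $\rho_0=\mathrm{id}$. This is true because $\rho\mapsto\rho(y)$ is a fibre bundle $SO(n+1)\to\mathbb{S}^n$. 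Alternatively, argue only locally: for $y'$ in a small spherical ball around $y$ inside $W$, use the rotation in the plane spanned by $y$ and $y'$, whose orbit of $y$ is the short great-circle arc; then conclude by connectedness of $W$.

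Your caveat on $(ii)$ is a genuine correction of the statement. For a reflection $f$ of $\mathbb{S}^n$ and $y\in f(U)$, one has $\mu(y,f,U)=\mu(f)=-1$, so $(ii)$ as printed is false. For injective $f$ only $\mu=\pm1$ holds, which your invariance-of-domain argument establishes. The value $+1$ requires $f$ to be sense-preserving, as you say. Since the paper never uses $(ii)$, the error does not affect its results.
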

Next we state the another lemma, which we require to prove
Theorem~\ref{T: QR Rouche's-type}.
\begin{lemma}\label{L: Cardinality}\cite[Ch.1, Proposition 4.10]{Rickman1993}
For $U \subset X$, where $X \subset \mathbb{S}^n$ for $n \geq 2$
be the domain and $y \in \mathbb{S}^n$, we write
$N(y,f,U) := \operatorname{card}\big(f^{-1}(y)\cap U\big)$.
Suppose that $f:X \to \mathbb{S}^n$ be sense-preserving, discrete and open.
Then, if $U \subset\subset X$, then
$N(y,f,U) \le \mu(y,f,U) \quad \text{for all } y \notin f(\partial U)$.
\end{lemma}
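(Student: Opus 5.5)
The plan is to show that the fibre $f^{-1}(y)\cap U$ is finite, to isolate its points in small disjoint balls, and to split the degree $\mu(y,f,U)$ as a sum of local degrees over these balls. Each local degree is a positive integer by sense-preservation, so the sum is at least the number of balls.

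\textbf{Finiteness.} Fix $y\notin f(\partial U)$. Since $U\subset\subset X$, $\overline U$ is compact and $f$ is continuous on it. Hence $f^{-1}(y)\cap\overline U$ is compact. It does not meet $\partial U$, because $y\notin f(\partial U)$, so it lies in $U$. By discreteness it consists of isolated points, and a compact set of isolated points is finite. So $N=N(y,f,U)$ is finite; call the points $x_1,\dots,x_N$. If $N=0$ there is nothing to prove, since $\mu(y,f,U)=0$ by Lemma~\ref{L:Topological Degree}$(i)$.

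\textbf{Localization.} Choose radii so small that the closed balls $\overline{B_i}=\overline B(x_i,r_i)$ are pairwise disjoint, lie in $U$, and satisfy $\overline{B_i}\cap f^{-1}(y)=\{x_i\}$. Then each $B_i$ is a domain with $B_i\subset\subset X$, $y\in f(B_i)$, and $y\notin f(\partial B_i)$. By Definition~\ref{D:sense} (and Lemma~\ref{L:qr-discrete} in the quasiregular case), $\mu(y,f,B_i)>0$, so $\mu(y,f,B_i)\ge 1$ as it is an integer.

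\textbf{Additivity.} The main step, and the one not listed in Lemma~\ref{L:Topological Degree}, is
\[
\mu(y,f,U)=\sum_{i=1}^N \mu(y,f,B_i).
\]
I would prove it directly from the homological definition. Set $V=\bigcup_i B_i$. Since $U\cap f^{-1}(y)=V\cap f^{-1}(y)$, excision gives an isomorphism $H_n(V,V\setminus f^{-1}(y))\to H_n(U,U\setminus f^{-1}(y))$ compatible with the maps $e_*$ and $f_{1*}$. Because the $B_i$ are disjoint open sets, $H_n(V,V\setminus f^{-1}(y))\cong\bigoplus_i H_n(B_i,B_i\setminus\{x_i\})$.

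Next, take the generator $\alpha\in H_n(\mathbb S^n)$ and its image $j_*\alpha\in H_n(\mathbb S^n,\mathbb S^n\setminus f^{-1}(y))$. Under this direct-sum decomposition, $j_*\alpha$ corresponds to the tuple of the $j_{i*}\alpha\in H_n(\mathbb S^n,\mathbb S^n\setminus\{x_i\})$; this follows by naturality of the inclusions $\mathbb S^n\setminus f^{-1}(y)\subset \mathbb S^n\setminus\{x_i\}$. Applying $f_{1*}$ and $k_*^{-1}$ then yields $h(\alpha)=\sum_i\mu(y,f,B_i)\,\alpha$.

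\textbf{Conclusion.} Combining additivity with the positivity of each term gives
\[
\mu(y,f,U)=\sum_{i=1}^N\mu(y,f,B_i)\ge N=N(y,f,U).
\]
Keeping the excision and direct-sum identifications straight in the additivity step is where most of the care lies; the finiteness and positivity steps are routine.
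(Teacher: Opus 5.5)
Your proof is correct and is essentially the standard argument behind the cited result from Rickman; the paper itself gives no proof. The fibre $f^{-1}(y)\cap U$ is finite by compactness and discreteness, and $\mu(y,f,U)$ splits by additivity of the local degree into the local degrees $\mu(y,f,B_i)$. Additivity is not among the properties listed in Lemma~\ref{L:Topological Degree}, so deriving it from the homological definition, as you do, is appropriate. Each $\mu(y,f,B_i)$ is then at least $1$ by sense-preservation, and openness of $f$ is never needed.

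When you write out the additivity step, state explicitly that for $l\neq i$ the inclusion $(B_l,B_l\setminus\{x_l\})\to(\mathbb{S}^n,\mathbb{S}^n\setminus\{x_i\})$ induces the zero map, because $B_l\subset\mathbb{S}^n\setminus\{x_i\}$. This is what identifies the $i$-th component of $e_*^{-1}j_*\alpha$ with the element that defines $\mu(y,f,B_i)$.
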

\section{Rouche's-type Theorem for quasiregular mappings}\label{S:Rouche-type}

The following result proved by Lappan in \cite{Lappan2003} was much similar to
Rouche's Theorem, here only one of the function
was assumed to be analytic, unlike the Rouche's Theorem where both the functions
are analytic.
\begin{result}\cite[Theorem 1]{Lappan2003} \label{T:Rouche's-type}
Let $D_r:=\{z \in \mathbb{C}: |z| < r\}$ be the disc of radius $r>0$,
$f$ be an analytic function and $g$ be a continuous and bounded function
on the closure of disc $D_r$ such that $|f(z)| > |g(z)|$ for $|z|=r$.
If there exits $z_0 \in D_r$ such that $f(z_0)=0$, then $f$ and $g$ do not
avoid each other in $D_r$, i.e. there exists $z'$ in $D_r$ such that
$f(z')-g(z')=0$.
\end{result}

We now give an analogue of the Result~\ref{T:Rouche's-type} to quasiregular setting,
by taking quasiregular mapping in place of analytic function and using the techniques
from algebraic topology. We are going to use this
analogue of Rouche's\,-\,type theorem in Section~\ref{S: Main Result}.

\begin{theorem}\label{T: QR Rouche's-type}
Let $B_r:=\{x: |x|<r\}$ $\subset$ $\mathbb{R}^n$
be the ball of radius $r>0$ centered at
the origin, $f$ be a quasiregular
mapping $B_r$,
and let $g$ be a continuous and bounded function
on the closure of ball $B_r$ such that $|f(x)| > |g(x)|$ for
$|x|=r$, i.e. $x \in \partial B_r$.
If there exits $x_0 \in B_r$ such that $f(x_0)=0$, then $f$ and $g$ do not
avoid each other in $B_r$, i.e. there exists some point $x'$ in $B_r$ such that
$f(x')-g(x')=0$.
\end{theorem}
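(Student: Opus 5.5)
The plan is a degree-theoretic argument: a straight-line homotopy from $f$ to $f-g$, together with homotopy invariance of the local degree. We assume, as the statement implicitly requires, that $f$ extends continuously to $\overline{B_r}$ (or is quasiregular on a neighbourhood of $\overline{B_r}$), so that boundary values make sense. If necessary, we first shrink to a slightly smaller concentric ball $B_{r'}$ on whose boundary the strict inequality $|f|>|g|$ still holds (by continuity and compactness) and which still contains $x_0$. Then $f$ is quasiregular on a neighbourhood of $\overline{B_{r'}}$, and Lemma~\ref{L:Topological Degree} and Lemma~\ref{L: Cardinality} apply with $U=B_{r'}\subset\subset X$.

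\textbf{Step 1: positive degree of $f$ at $0$.} Since $f(x_0)=0$ and $|f|>|g|\ge 0$ on $\partial U$, the point $0$ is $(f,U)$-admissible. If $f$ were constant, then $f\equiv 0$, contradicting $|f|>0$ on $\partial U$. So $f$ is non-constant, and by Lemma~\ref{L:qr-discrete} it is discrete, open and sense-preserving. Definition~\ref{D:sense} then gives $\mu(0,f,U)>0$, since $0\in f(U)\setminus f(\partial U)$. Alternatively, Lemma~\ref{L: Cardinality} gives $\mu(0,f,U)\ge N(0,f,U)\ge 1$.

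\textbf{Step 2: homotopy.} Set $h_t=f-tg$ for $t\in[0,1]$; this is a continuous homotopy on $\overline U$ from $h_0=f$ to $h_1=f-g$. For $x\in\partial U$,
\[
|h_t(x)|\ge |f(x)|-t|g(x)|\ge |f(x)|-|g(x)|>0,
\]
so $0$ is $(h_t,U)$-admissible for every $t$. By Lemma~\ref{L:Topological Degree}(iv), $\mu(0,f-g,U)=\mu(0,f,U)>0$. The local degree is defined for arbitrary continuous maps into $\mathbb{S}^n$, and $f-g$ takes values in $\mathbb{R}^n\subset\mathbb{S}^n$ because $g$ is bounded and $f$ is finite on the compact set $\overline U$.

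\textbf{Step 3: conclusion.} If $f-g$ had no zero in $U$, then $0\notin (f-g)(\overline U)$, since it is also nonzero on $\partial U$. Lemma~\ref{L:Topological Degree}(i) would then give $\mu(0,f-g,U)=0$, a contradiction. Hence there is $x'\in U\subset B_r$ with $f(x')=g(x')$.

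\textbf{Main obstacle.} The delicate point is the setup rather than the computation: ensuring $\overline U$ is compactly contained in the domain of the continuous map, so that the degree is defined and the homotopy invariance of Lemma~\ref{L:Topological Degree}(iv) applies. I would handle this by the shrinking-radius argument above, after fixing the interpretation that $f$ is continuous up to $\partial B_r$. The strict inequality on $\partial B_r$ persists on $\partial B_{r'}$ for $r'$ close to $r$ by uniform continuity of $|f|-|g|$ on the compact set $\overline{B_r}$.
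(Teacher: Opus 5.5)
Your proof is correct and follows the paper's argument: the straight-line homotopy $f-tg$, positivity of $\mu(0,f,\cdot)$ via Lemma~\ref{L: Cardinality} (or sense-preservation), homotopy invariance from Lemma~\ref{L:Topological Degree}(iv), and the vanishing-degree contradiction from Lemma~\ref{L:Topological Degree}(i). Your shrinking to $B_{r'}$ and your check that $f$ is non-constant fill in two points the paper passes over: the degree lemmas need $U\subset\subset X$, and Lemma~\ref{L: Cardinality} needs $f$ to be discrete, open and sense-preserving.
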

\begin{proof}
Let us assume that there exits $x_0 \in B_r$ such that $f(x_0)=0$.\smallskip

Define $h : B_r\times I \rightarrow \mathbb{R}^n$ given by,
$$h(x,t)= f(x)-tg(x), \text{where } t\in I=[0,1] \text{ and } x \in B_r.$$
Clearly, $h$ is well-defined and continuous and
$$h(x,0)=f(x) \text{ and } h(x,1)=f(x)-g(x), \text{ where } x \in B_r .$$
Therefore, $h$ is a homotopy from $f$ to $f-g$ on $B_r$.\smallskip

It follows from Lemma~\ref{L:qr-discrete} that every non-constant
quasiregular mapping is discrete, open and sense-preserving.
Hence, the zeros of quasiregular mappings are isolated.
Let $\mu(y,f,U)$ $\in$ $\mathbb{Z}$ be the local topological degree
or topological index of $f$ at $y$ with respect to $U \subset \overline B_r$, open
and $U \subset \subset B_r$.\smallskip

Recall, for $U\subset \overline{B_r}$ and $y \in \mathbb{R}^n$,
we write $N(U,f,A)=\operatorname{card}(f^{-1}(y)\cap A)$.
Since $f$ is sense-preserving, discrete and open. Therefore, we have
$N(y,f,B_r)\leq \mu(y,f,B_r)$, for every
$y \notin f(\partial B_r)$, using Lemma~\ref{L: Cardinality}.
Therefore, if we take $U=B_r$, then
we have that $\mu(0,f,B_r) \geq 1$,
as $N(0,f,B_r)=\operatorname{card}(f^{-1}(0)\cap B_r) \geq 1$, since $f$
has a zero inside $B_r$. \smallskip

Since, $|f(x)| > |g(x)|$ for $\partial B_r$, therefore
$0 \notin f(\partial B_r)$ and we also have $f$ and $f-g$ homotopic via $h(x,t)$ and
$0 \notin h (\partial B_r)$ as $|h(x,t)| \geq |f(x)|-t|g(x)| \geq |f(x)|-|g(x)|>0$,
for all $x \in \partial B_r$,
which implies $h(x,t) \neq 0$ for all $x \in \partial B_r$.
So, $0$ is $(h,B_r)-$admissible for all $t \in [0,1]$. \smallskip

Hence, by Lemma~\ref{L:Topological Degree}($iv$) we have
$\mu(0,f-g,B_r)=\mu(0,f,B_r)\geq 1$ and using Lemma~\ref{L:Topological Degree}($i$)
there exists $x \in \bar{B_r}$, such that
$(f-g)(x)=0$, but $x \notin \partial B_r$ as
$|f(x)| > |g(x)|$ for $\partial B_r$, so $x \in B_r$.
Hence, we have that $f-g$ assumes the value zero in $B_r$.
\end{proof}
We can also observe here that role of quasiregular mapping and continuous
function can not be reversed. To illustrate this,
the function $g(x)=\sqrt{x_1^{2}+x_2^{2}}$
is continuous on $B_r:=\{x=(x_1,x_2) \in \mathbb{R}^2 : \sqrt{x_1^{2}+x_2^{2}}<1\}$
and assumes the value zero on $\mathbb{R}^2$, $f(x)=1/2$ is quasiregular
mapping on $B_r$ and $|g(x)|\geq|f(x)|$, for $x \in \partial B_r$,
but clearly $g-f$ does not assume the value zero in $B_r$.\smallskip

Lappan \cite{Lappan2003} also established two corollaries of
Result~\ref{T:Rouche's-type} (see results~\ref{C:Rouche's-type}
and~\ref{C:Rouche's-type 2} below). We obtain natural analogues of these
corollaries in the quasiregular setting (see
corollaries~\ref{C: QR Rouche's-type}
and~\ref{C:QR Rouche's-type 2}).
For completeness, we first state the corollaries due to Lappan,
followed by proofs of their quasiregular analogues.
We begin with the first corollary, which is essentially a
restatement of Result~\ref{T:Rouche's-type} for an arbitrary disc.

\begin{result}\cite[Corollary 1]{Lappan2003}\label{C:Rouche's-type}
Let $Q$ be the disc in the complex plane of radius $r>0$,
$f$ be an analytic function and $g$ be a continuous and bounded function
on the closure of disc $Q$ such that $|f(z)| > |g(z)|$ for $z \in \partial Q$.
If there exits $z_0 \in Q$ such that $f(z_0)=0$, then $f$ and $g$ do not
avoid each other in $Q$, i.e. there exists $z'$ in $Q$ such that
$f(z')-g(z')=0$.
\end{result}
So by taking motivation from above restatement of Result~\ref{T:Rouche's-type},
we can also restate Theorem~\ref{T: QR Rouche's-type} as follows, by replacing
the ball $B_r$ centered at origin of radius $r$, by any ball of radius
$r$ in $\mathbb{R}^n$.
\begin{corollary}\label{C: QR Rouche's-type}
Let $B$ $\subset$ $\mathbb{R}^n$ be a ball of radius $r>0$, $f$ be a $K$-quasiregular
mapping and $g$ be a continuous and bounded
function on the closure of ball $B$ such that $|f(x)| > |g(x)|$ for $x \in \partial B$.
If there exits $x_0 \in B$ such that $f(x_0)=0$, then $f$ and $g$ do not
avoid each other in $B$, i.e. there exists $x'$ in $B$ such that
$f(x')-g(x')=0$.
\end{corollary}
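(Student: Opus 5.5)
The plan is to reduce Corollary~\ref{C: QR Rouche's-type} to Theorem~\ref{T: QR Rouche's-type} by a translation of the domain. Let $B$ have centre $c\in\mathbb{R}^n$ and radius $r$, so that $B=\{x:|x-c|<r\}$. Define $T:\mathbb{R}^n\to\mathbb{R}^n$ by $T(x)=x+c$. Then $T$ is an isometry mapping $B_r$ onto $B$, $\overline{B_r}$ onto $\overline{B}$, and $\partial B_r$ onto $\partial B$. Set
\[
\tilde f:=f\circ T,\qquad \tilde g:=g\circ T .
\]

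First we check the hypotheses of Theorem~\ref{T: QR Rouche's-type} for $\tilde f$ and $\tilde g$. The map $\tilde f$ is quasiregular on $B_r$ with the same constant $K$. Indeed, $T$ is smooth with $T'=\mathrm{Id}$, so $\tilde f\in W^{1,n}_{\mathrm{loc}}(B_r)$, $\tilde f'(x)=f'(x+c)$ and $J_{\tilde f}(x)=J_f(x+c)$. The distortion inequality \eqref{E: Main Inequality} therefore transfers verbatim, since $T$ maps null sets to null sets. Next, $\tilde g$ is continuous and bounded on $\overline{B_r}$, as a composition with a homeomorphism of closures, and its bound equals that of $g$. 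For $|x|=r$ we have $T(x)\in\partial B$, hence $|\tilde f(x)|=|f(T(x))|>|g(T(x))|=|\tilde g(x)|$. Finally, the point $\tilde x_0:=x_0-c$ lies in $B_r$ and satisfies $\tilde f(\tilde x_0)=f(x_0)=0$.

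Theorem~\ref{T: QR Rouche's-type} then yields $\tilde x\in B_r$ with $\tilde f(\tilde x)=\tilde g(\tilde x)$. Putting $x':=\tilde x+c\in B$ gives $f(x')=g(x')$, which is the desired conclusion.

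There is essentially no obstacle here. The only point needing a sentence of justification is that quasiregularity is preserved under precomposition with a translation, and this follows immediately from the chain rule for Sobolev functions composed with affine maps.

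Alternatively, the proof of Theorem~\ref{T: QR Rouche's-type} never used that the ball is centred at the origin. It only used the homotopy $f-tg$, admissibility of $0$ on $\partial B$, Lemma~\ref{L: Cardinality}, and Lemma~\ref{L:Topological Degree}. One could therefore repeat that argument verbatim with $B$ in place of $B_r$, and I would mention this as a remark.
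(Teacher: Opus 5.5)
Your proposal is correct and follows the same route as the paper, which likewise reduces the corollary to Theorem~\ref{T: QR Rouche's-type} by translating $B$ onto $B_r$. You supply the checks the paper's one-line argument leaves implicit: quasiregularity of $f\circ T$, the boundary inequality, and the location of the zero. Calling the centre $c$ also avoids the paper's clash of naming the centre $x_0$, the same letter as the zero of $f$.
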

\begin{proof}
Let us assume that $B$ is ball centered at $x_0$ and radius $r>0$,
clearly $B$ is nothing but the translated ball of $B_r=\{x \in \mathbb{R}^n: |x|<r\}$,
without loss of generality one can assume that $f$ and $g$ be the mappings
defined on $B_r$.\smallskip

Therefore, the conclusion of Corollary~\ref{C: QR Rouche's-type} is evidently the
restatement of the interpretation of Theorem~\ref{T: QR Rouche's-type}.
\end{proof}

Lappan also extended Result~\ref{T:Rouche's-type}, replacing zero by any other
value as follows.
\begin{result}\label{C:Rouche's-type 2}\cite[Corollary 2]{Lappan2003}
Let $Q$ be the disc in the complex plane of radius $r>0$,
$f$ be an analytic function and $g$ be a continuous and bounded
on the closure of disc $Q$ such that $|f(z)| > |g(z)|$ for $z \in \partial Q$.
Suppose that $0 < |\alpha| < \inf_{z \in \partial Q}(|f(z)|-|g(z)|)$ and if
there exits $z_0 \in Q$ such that $f(z_0)=0$, then there exists $z'$ in $Q$ such that
$f(z')-g(z')=\alpha$.
\end{result}

A natural extension of Theorem~\ref{T: QR Rouche's-type} is quite evident
by the interpretation of Result~\ref{C:Rouche's-type 2}, which could be
stated as follows.
\begin{corollary}\label{C:QR Rouche's-type 2}
Let $B$ $\subset$ $\mathbb{R}^n$ be a ball of radius $r>0$, $f$ be a
$K$-quasiregular mapping
and $g$ be a continuous and bounded on the closure of ball $B$
such that $|f(x)| > |g(x)|$ for $x \in \partial B$.
Suppose that $0 < |\alpha| < \inf_{x \in \partial B}(|f(x)|-|g(x)|)$ and if
there exits $x_0 \in B$ such that $f(x_0)=0$, then there exists $x'$ in $B$ such that
$f(x')-g(x')=\alpha$.
\begin{proof}
Let if possible there exists some $x' \in \partial B$ that $|f(x')|\leq |g(x')+\alpha|$,
$\implies$ $|f(x')|\leq |g(x')|+|\alpha|$ $\implies$ $|f(x')-|g(x')|\leq|\alpha|$.
Which is the contradiction to our assumption,
$0 < |\alpha| < \inf_{x \in \partial B}(|f(x)|-|g(x)|)$.\smallskip

Hence, we have $|f(x)| > |g(x)+\alpha|$, for all $x \in \partial B$. Therefore,
replacing $g$ in Theorem~\ref{T: QR Rouche's-type} as $g+\alpha$, will get
Corollary~\ref{C:QR Rouche's-type 2} naturally.
\end{proof}
\end{corollary}
\section{Avoidance criterion for normal families}\label{S: Main Result}
Lappan proved the following similar result as that of
the well-known rescaling lemma by  Zalcmann~\cite[\S 3(The main lemma)]{Zalcman},
to prove some of the generalised
consequences of the Result~\ref{BBHM}.

\begin{result}\cite[Lemma 1]{Lappan2003}\label{L: avoid-type}
Suppose that $(f_n)$ is a sequence of continuous functions on the unit disc $\mathbb{D}
\subset \mathbb{C}$ such that, $(f_n)$ converges locally uniformly
on $\mathbb{D}$ to a function $f$. Suppose that $(z_n)$ is a sequence
in $\mathbb{D}$ converging
to a point $z_0 \in \mathbb{D}$ and there is a sequence of  positive reals say
$(\rho_n)$ converging to zero such that the sequence of function
$(G_n(t)=g_n(z_n+\rho_n \xi))$ converges locally uniformly on $\mathbb{C}$ to
a non-constant meromorphic functions
$g: \mathbb{C} \rightarrow \mathbb{S}^2$ and if,
for each $n$, $f_n$ and $g_n$ avoid each on $\mathbb{D}$
then $g$ omits the value $f(z_0)$.
\end{result}

This raises the following question: whether a version of Result~\ref{L: avoid-type}
remains valid for quasiregular mappings or not?
Using our analogue of Rouche's-type Theorem~\ref{T: QR Rouche's-type}, we prove the
Proposition~\ref{L: QR avoid-type}, which is the
analogue of Result~\ref{L: avoid-type}, for
quasiregular mappings in higher dimensions, obtained by replacing sequence of
analytic function by quasiregular mappings and it is similar to that of
Miniowitz version of Zalcmann re-scaling
result (see Lemma~\ref{L:QR zalcmann}).\smallskip

\begin{proposition}\label{L: QR avoid-type}
Let $(f_n)$ be a sequence of continuous functions on domain $D
\subset \mathbb{R}^n$ such that, $(f_n)$ converges locally uniformly
on $D$ to a function $f$. Suppose that $(x_n)$ is a sequence  in $D$ converging
to a point $x_0 \in D$ and there is a sequence of  positive reals say
$(\rho_n)$ converging to zero such that the sequence of function
$(G_n(t)=g_n(x_n+\rho_n \xi))$ converges locally uniformly on $\mathbb{R}^n$ to
a non-constant quasimeromorphic mapping
$g : \mathbb{R}^n \rightarrow \mathbb{S}^n$ and if,
for each $n$, $f_n$ and $g_n$ avoid each on $\mathbb{D}$
then $g$ omits the value $f(x_0)$.
\end{proposition}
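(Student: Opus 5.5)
We argue by contradiction. Set $a:=f(x_0)$ and suppose $g(\xi_0)=a$ for some $\xi_0\in\mathbb{R}^n$. We read the hypotheses so that the $g_n$ are $K$-quasimeromorphic on $D$ (as in Lemma~\ref{L:QR zalcmann}), $f_n,f$ are finite-valued and continuous, and ``avoid each other on $\mathbb{D}$'' means on $D$. The strategy is to transfer the coincidence $g(\xi_0)=a$ back to the rescaled maps $G_n$ and to produce, via Corollary~\ref{C: QR Rouche's-type}, a point where $g_n=f_n$. That contradicts the avoidance hypothesis.

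First we localize. Since $g$ is non-constant, Lemma~\ref{L:qr-discrete} shows $g$ is discrete, so $g^{-1}(a)$ is isolated. Also $g$ is finite near $\xi_0$ because $a\neq\infty$. Choose $r>0$ so that $g$ is finite and quasiregular on $\overline{B}(\xi_0,r)$ and $g(\xi)\neq a$ on $\partial B(\xi_0,r)$. By compactness, $m:=\min_{|\xi-\xi_0|=r}|g(\xi)-a|>0$. Local uniform convergence in the spherical metric, together with the boundedness of $g$ on $\overline{B}(\xi_0,r)$, gives Euclidean uniform convergence $G_n\to g$ on that closed ball. In particular, for large $n$ the maps $G_n$ are finite, hence $K$-quasiregular, on a neighbourhood of it.

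Next we set up the Rouche data for each large $n$. On $\overline B:=\overline{B}(\xi_0,r)$ define
\[
F_n(\xi)=G_n(\xi)-a,\qquad \phi_n(\xi)=f_n(x_n+\rho_n\xi)-a,
\]
so $F_n$ is quasiregular and $\phi_n$ is continuous and bounded. Because $x_n+\rho_n\xi\to x_0$ uniformly on $\overline B$ and $f_n\to f$ locally uniformly with $f$ continuous, we get $\sup_{\overline B}|\phi_n|\to 0$. Hence for large $n$ we have $|F_n|\ge m/2>|\phi_n|$ on $\partial B$.

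We also need $F_n$ to have a zero in $B$. This is Hurwitz-type information. The cleanest route is the degree argument already used in the proof of Theorem~\ref{T: QR Rouche's-type}. The homotopy $G_n+t(g-G_n)$, $t\in[0,1]$, avoids $a$ on $\partial B$ for large $n$, so $\mu(a,G_n,B)=\mu(a,g,B)$ by Lemma~\ref{L:Topological Degree}$(iv)$. Moreover $\mu(a,g,B)\ge N(a,g,B)\ge 1$ by Lemma~\ref{L: Cardinality}, and $\mu(a,G_n,B)\ge1$ then forces $a\in G_n(B)$ by Lemma~\ref{L:Topological Degree}$(i)$. Corollary~\ref{C: QR Rouche's-type} applied to $F_n,\phi_n$ on $B$ now yields $\xi'\in B$ with $F_n(\xi')=\phi_n(\xi')$. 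Setting $y=x_n+\rho_n\xi'\in D$ (valid for large $n$), this says $g_n(y)=f_n(y)$, contradicting avoidance.

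The main technical obstacle is passing from spherical to Euclidean uniform convergence and making sure the $G_n$ have no poles on $\overline B$, so that Corollary~\ref{C: QR Rouche's-type} applies with finite-valued maps. Choosing $r$ small enough that $g$ is bounded on $\overline B$ handles this.
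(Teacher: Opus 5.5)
Your proof is correct and follows the paper's route: argue by contradiction, isolate a preimage $\xi_0$ of $f(x_0)$ using discreteness, make $f_n(x_n+\rho_n\xi)-f(x_0)$ uniformly small on a small sphere about $\xi_0$, and apply Corollary~\ref{C: QR Rouche's-type} to the rescaled maps to force a coincidence of $g_n$ and $f_n$. The differences are minor: you obtain the zero of $G_n-a$ in $B$ from the homotopy $G_n+t(g-G_n)$ and degree invariance, where the paper uses the quasiregular Hurwitz lemma (Lemma~\ref{L: Hurwitz}), and your step needs no uniform dilatation bound; you also normalize by subtracting $a$ rather than by a M\"obius map, which suffices under your finite-valued reading (the case $f(x_0)=\infty$ reduces to it by post-composing with a M\"obius map).
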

\begin{proof}
We have been given a sequence $(x_n)$ in $D$ converging
to a point $x_0 \in D$ and a sequence of  positive reals say
$(\rho_n)$ converging to zero such that the sequence of function
$(G_n(t)=g_n(x_n+\rho_n \xi))$
converges locally uniformly on $\mathbb{R}^n$ to
a non-constant quasimeromorphic mapping
$g : \mathbb{R}^n \rightarrow \mathbb{S}^n$. \smallskip

By the use of suitable M\"obius transformation, (without loss of generality)
we may assume that $f(x_0)=0$. If $g$ omits the origin, then there is nothing to prove.
Let us assume that $g$ passes through the origin.\smallskip

Consider the functions, $h_n:\mathbb{R}^n \rightarrow \mathbb{S}^n$
$$h_n (\xi)=g_n (x_n+\rho_n \xi)-f_n (x_n +\rho_n \xi) \rightarrow g(\xi)-f(x_0)=g(\xi)$$

Since $g$ passes through the origin, or we can say $g$ assumes the value $f(x_0)=0$,
there exists a point $\xi_0$ such that $g(\xi_0)=0$.
Therefore, using the fact that non-constant quasimeromorphic mappings are discrete,
there exists $\sigma,\delta > 0$ such that $|g(\xi)|>\sigma$, whenever
$|\xi-\xi_0|=\delta$ and $g(\xi)$ is
quasiregular for $|\xi-\xi_0| \leq \delta$.\smallskip

Now for a fixed $\xi$, $x_n+\rho_n \xi \rightarrow x_0$, we have that
$f(x_n+\rho_n \xi)$ converges uniformly to origin for $|\xi-\xi_0| \leq \delta$. We also
have, $(G_n)$ converges uniformly to $g$ on $\{\xi: |\xi-\xi_0| \leq \delta\}$.\smallskip

For sufficiently large $n \in \mathbb{N}$, we have both
$|g_n(x_n+ \rho_n \xi)| > 3\sigma/4$ whenever $|\xi-\xi_0|= \delta$ as
$|g(\xi)|> \sigma$ whenever $|\xi-\xi_0|= \delta$ and $|f_n(x_n+ \rho_n \xi)|< \sigma/4$
whenever $|\xi-\xi_0|=\delta$, as $|f(x_0)|=0$. Therefore, for sufficiently large $n$
we have $|g_n(x_n+ \rho_n \xi)| \geq |f_n(x_n +\rho_n \xi)|$
whenever $|\xi-\xi_0|=\delta$.\smallskip

Furthermore, for sufficiently large $n \in \mathbb{N}$, $G_n(\xi)=g_n(x_n+\rho_n \xi)$
assumes the value zero in the disc $|\xi-\xi_0|< \delta$, otherwise by Hurwitz
Theorem for quasiregular mappings (see Lemma~\ref{L: Hurwitz}),
the limit mapping $g$ would need to
be identically zero, which is not possible as $g$ is non-constant.\smallskip

Thus, we can now apply Corollary~\ref{C: QR Rouche's-type} to $g_n(z_n+ \rho_n \xi)$ and
$f_n(z_n +\rho_n \xi)$ for sufficiently large $n$, where we assume these are maps in
$\xi$ on the ball $B=\{\xi:|\xi-\xi_0|\leq \delta\}$.
Hence, we get that $g_n$ and $f_n$ do
not avoid each other on $B \subset D$, which is the contradiction to our assumption
that $g_n$ and $f_n$ avoid each other, for each $n \in \mathbb{N}$.\smallskip

Hence, we conclude that $g$ cannot pass through the origin. In particular, we have
$g$ omits the point $f(x_0)$.
\end{proof}

Using Result~\ref{L: avoid-type}, Lappan proved the following extension
of Theorem~\ref{BBHM}, which tends to make one think of a well-known modification
of the result about the three omitted values, see \cite{Lappan1994}. \smallskip

\begin{result}\label{T: BBHM Extension}\cite[Theorem 2]{Lappan2003}
If  for each $n\in \mathbb{N}=\{1,2,3,\dots\}$,
let $f_{1,n}, f_{2,n}$ and $f_{3,n}$ be three continuous functions
on $\mathbb{D}:=\{z\in \mathbb{C} : |z| < 1\}$ such that for
each $j=1,2,3,$ the sequence $(f_{j,n})$
converges locally uniformly to a function $f_j$ on $\mathbb{D}$.
Suppose that the three functions $f_1, f_2,$ and $f_3$ avoid each other on $\mathbb{D}$.
Let $\mathcal{F} =(g_n)$ be a sequence of meromorphic functions in $\mathbb{D}$
with the property that for each $n$,
the four functions $g_n, f_{1,n}, f_{2,n},$ and $f_{3,n}$
avoid each other, then $\mathcal{F}$ is normal.
\end{result}

Now, it is quite natural that next we are going to use
Proposition~\ref{L: QR avoid-type},
to prove the analogous result of Theorem~\ref{T: BBHM Extension} in the quasiregular
setting, by taking $q_0(n,k)$ sequences of functions, instead of three
and the proof follows the same technique as that of Lappan, recall that this
$q_0(n,k)$ is same as that of obtained in Picard's version of quasiregular
mapping (see Lemma~\ref{L:Picard}).\smallskip

\begin{theorem}\label{T: QR BBHM Extension}
Consider the $q=q(n,K) \in \mathbb{N}$ sequences of continuous functions
on $D \subset \mathbb{R}^n$ say $(f_{1,n}), (f_{2,n})$,\ldots, $(f_{q,n})$ such that
for each $j=1,2,\ldots,q_0$ the sequence $(f_{j,n})$ converges locally uniformly
to a function on ${D}$. Suppose that these functions
$f_1, f_2, \ldots,f_{q_0}$ avoid each other on $D$.
Let $\mathcal{F} =(g_n)$ be the sequence of $K$-quasimeromorphic mappings in $D$
with the property that for each $n$,
the $q_0+1$ functions $g_n, f_{1,n}, f_{2,n},$ and $f_{q_0,n}$
avoid each other, then $\mathcal{F}$ is normal.
\end{theorem}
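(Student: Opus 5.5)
We argue by contradiction, following Lappan's proof of Result~\ref{T: BBHM Extension}, with Lemma~\ref{L:QR zalcmann}, Proposition~\ref{L: QR avoid-type} and Lemma~\ref{L:Picard} as the three ingredients.

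Suppose $\mathcal{F}=(g_n)$ is not normal. Then it fails to be normal at some point $x_0\in D$. Normality is local and invariant under affine changes of variable in the domain, so we may translate and dilate so that a small ball around $x_0$ becomes the unit ball $\mathbb{B}^n$. The maps $g_n$ are $K$-quasimeromorphic, so we may use Miniowitz's lemma in its quasimeromorphic form, obtained by post-composing with a spherical isometry as described in \S\ref{S:Prelims}.

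Lemma~\ref{L:QR zalcmann} then gives a subsequence, still denoted $(g_n)$, together with points $x_n\to x_0$ and radii $\rho_n\to 0^+$. Along this subsequence,
\[
G_n(\xi)=g_n(x_n+\rho_n\xi)\to g(\xi)
\]
locally uniformly on $\mathbb{R}^n$, where $g:\mathbb{R}^n\to\mathbb{S}^n$ is a non-constant $K$-quasimeromorphic mapping. The dilatation stays $K$ because dilatation is preserved under affine rescaling and under locally uniform limits, by Reshetnyak's theorem.

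Fix $j\in\{1,\dots,q_0\}$. Along the same subsequence, $f_{j,n}\to f_j$ locally uniformly, and $g_n$ and $f_{j,n}$ avoid each other for every $n$. Proposition~\ref{L: QR avoid-type} therefore applies to the pair $(f_{j,n})$, $(g_n)$ with the same $x_n$ and $\rho_n$, and shows that $g$ omits the value $f_j(x_0)$. Since $f_1,\dots,f_{q_0}$ avoid each other on $D$, the values $f_1(x_0),\dots,f_{q_0}(x_0)$ are $q_0$ distinct points of $\overline{\mathbb{R}^n}$. So $g$ is a $K$-quasimeromorphic map of $\mathbb{R}^n$ omitting $q_0=q_0(n,K)$ distinct points, and Lemma~\ref{L:Picard} forces $g$ to be constant. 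This contradicts the non-constancy of $g$, so $\mathcal{F}$ is normal.

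The main obstacle is the application of Proposition~\ref{L: QR avoid-type} to each $j$. Its proof normalises $f(x_0)=0$ by a Möbius map and then uses the Rouché-type Theorem~\ref{T: QR Rouche's-type} on a small ball $|\xi-\xi_0|\le\delta$. On that ball $g$ must be finite-valued and quasiregular, which holds because the poles of $g$ are discrete and can be avoided by shrinking $\delta$. The normalising Möbius map must also not destroy the hypotheses: the $f_{j,n}$ should be real-valued continuous functions, or we compose with a spherical isometry that sends $f_j(x_0)$ to $0$ and is uniformly continuous near it, so that $f_{j,n}(x_n+\rho_n\xi)\to 0$ uniformly on the ball. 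A minor point to check is that Lemma~\ref{L:QR zalcmann} holds for quasimeromorphic families and not only quasiregular ones; if necessary we cite its standard extension. Finally, we use that the avoidance hypothesis is pointwise for each $n$, so it passes to every subsequence.
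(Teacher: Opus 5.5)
Your proposal is correct and follows essentially the same route as the paper. Both argue by contradiction and use the Miniowitz--Zalcman rescaling (Lemma~\ref{L:QR zalcmann}) to obtain a non-constant $K$-quasimeromorphic limit $g$, then apply Proposition~\ref{L: QR avoid-type} for each $j$ to see that $g$ omits the $q_0$ distinct values $f_1(x_0),\dots,f_{q_0}(x_0)$, which contradicts the quasiregular Picard theorem (Lemma~\ref{L:Picard}); your extra remarks (taking the $f_{j,n}$ along the same subsequence, keeping poles of $g$ out of the Rouch\'e ball, and the M\"obius normalisation) only make explicit details that the paper leaves implicit.
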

\begin{proof}
Suppose, on the contrary $\mathcal{F}$ is not a normal family, then $\mathcal{F}$
must not be normal in the neighbourhood of some point say $x_0 \in D$.\smallskip

According to Miniowitz version of Zalcmann Lemma~\ref{L:QR zalcmann}
there exist $r \in (0, dist(x_0, \partial D))$, a sequence $(x_n)$ of
points in ${B}(x_0, r)$ such that $(x_n)$ converges to $x_0$,
a subsequence of $(g_n)$, without loss of generality,
let us again call it $(g_{n})$
and a sequence $(\rho_n)$ in $(0,1)$, such that the sequence
$G_j(\xi)=g_{n}(x_n+\rho_n \xi)$ converges locally uniformly on $\mathbb{R}^n$
to a  a non-constant quasimeromorphic mapping
$g(\xi): \mathbb{R}^n \rightarrow \mathbb{S}^n$.\smallskip

Therefore, for fix $j$  by Proposition~\ref{L: QR avoid-type},
on $f_{j,n}$ and
$g_n$, $g$ omits the $q$ distinct points
$f_1(x_0), f_2(x_0), \ldots, f_{q_0}(x_0)$, which is impossible for a non-constant
quasimeromorphic mappings.\smallskip

Thus, the assumption that $\mathcal{F}$ is not normal family is unsound.
\end{proof}

\section{Avoidance criterion for normal quasiregular mappings}\label{S: ACFQM}
We begin this section by briefly introducing the concept of normal functions.
The notion of normality, introduced by Montel, was later extended to individual
meromorphic functions by Yosida~\cite{Yosida1934},
who defined \emph{property $(A)$} via the normality of the family of
translates $\{f(z+a_i)\}$, for given any sequence of complex number
$(a_{i})$. Functions satisfying this condition were said to belong to \emph{class $(A)$}.
This idea was further developed by Noshiro~\cite{Noshiro1938},
who studied meromorphic functions in the unit disc $\mathbb{D}$
and associated them with the family
\[
f_a(z) = f\!\left(\frac{z-a}{\overline{a}z - 1}\right), \quad a \in \mathbb{D},
\]
defining class $(A)$ through the normality of $\{f_a\}$ in $\mathbb{D}$.\smallskip

The name \textbf{normal function} was given later by
Lehto and Virtanen~\cite{LehtoVirtanen1957} in $1957$, where they introduced the
general notion of normal meromorphic functions as follows:
A meromorphic function $f$ in a hyperbolic domain
$D \subset \mathbb{C}$ is said to be normal if the
family of its compositions with automorphisms of $D$, namely
\[
\{ f \circ \phi : \phi \in \mathrm{Aut}(D) \},
\]
forms a normal family in $D$ with respect to the spherical metric.\smallskip

These functions exhibit several features analogous to those of bounded analytic
functions, with invariance under M\"obius transformations playing a central role
in their analysis. For a more comprehensive study of normal meromorphic
functions, we refer the reader to~\cite{Pommerenke1975}.\smallskip

Motivated by the definition in the planar domain, Fletcher and Nicks
in~\cite{FletcherNicks2024}, defined the following analogous
definition of normal quasiregular mappings.
\begin{definition}\cite[Definition $4.1$]{FletcherNicks2024}\label{D: NQM}
For $n \geq 2$, suppose $X \subset \mathbb{S}^n$ is a domain
equipped with a metric $d_X$ induced by a conformal metric.
Suppose that $\mathcal{G}$ is a family of transitive, conformal,
orientation-preserving and surjective self isometries of $X$.
\begin{enumerate}
\item [$(i)$] A quasiregular mapping $f : X \to \mathbb{S}^n$
is said to be \emph{normal} if the associated family
\[
\mathcal{F} = \{ f \circ A : A \in \mathcal{G} \} \subset Q_K(X, \mathbb{S}^n)
\]
forms a normal family.

\item [$(ii)$] A quasiregular mapping $f : X \to \mathbb{R}^n$ is
called \emph{normal} if there exists a point $x_0 \in X$ such that the family
\[
\mathcal{F} = \{ f(A(x)) - f(A(x_0)) : A \in \mathcal{G} \}
\subset Q_K(X, \mathbb{R}^n)
\]
is normal.
\end{enumerate}
\end{definition}
Lappan in~\cite{Lappan2003}, proved the following version of Result~\ref{BBHM}
by Bargmann et al. for normal functions in the planar domain.
\begin{result}\cite[Theorem 3]{Lappan2003}\label{T: BBHM NF}
Suppose that $f_1, f_2,$ and $f_3$ be three continuous functions
on a unit disc $\mathbb{D}$ avoiding each other uniformly
on $\mathbb{D}$, such that the family
$\mathcal{F}_j =\{ f_j \circ \varphi : \varphi \in \mathrm{Aut}(\mathbb{D}) \}$
is normal in $\mathbb{D}$ for each $j=1,2$ and $3$.
Let $g$ be a meromorphic function in $D$
with the property that the four functions $g, f_1, f_2,$ and $f_3$ avoid each other on
$\mathbb{D}$, then $g$ is a normal function.
\end{result}
Now one can ask a natural question: whether an analogue of
Result~\ref{T: BBHM NF} can be given in quasiregular setting or not? Next,
we are going to prove the Theorem~\ref{T: ACNQM1} and Theorem~\ref{T: ACNQM2},
which gives an assertive answer to this question that one
can obtain the analogue of Result~\ref{BBHM}
for normal quasiregular mapping. First, we will prove the analogue for the
quasiregular mappings from $X$ into $\mathbb{S}^n$ as follows. \smallskip

\begin{theorem}\label{T: ACNQM1}
For $n\geq2$, let $X \subset \mathbb{S}^n$ be a domain, equipped with
the conformal metric $d_X$ and let $\mathcal{G}$ be a family of transitive, conformal,
orientation-preserving and surjective self isometries of $X$. Suppose that
$f_1, f_2, \dots$, $f_{q_0}$ be $q_0=q_0(n,K)$
continuous functions from $X$ into $\mathbb{S}^n$, avoiding each other uniformly
on $X$, such that the family
$\mathcal{F}_j =\{ f_j \circ A : A \in \mathcal{G}\} \subset Q_K(X, \mathbb{S}^n)$
is normal for each $j=1,2, \dots$, $q_0$.
Let $g: X \rightarrow \mathbb{S}^n$ be a $K$-quasiregular mapping,
with the property that these $q_0+1$ functions $g, f_1, f_2,\dots$, $f_{q_0}$
avoid each other on $X$, then $g$ is a normal quasiregular mapping.
\end{theorem}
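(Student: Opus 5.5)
We argue by contradiction: assume $g$ is not normal, i.e.\ the family $\mathcal{F}=\{g\circ A : A\in\mathcal{G}\}\subset Q_K(X,\mathbb{S}^n)$ is not normal. By Definition~\ref{D:qr normal}$(iii)$ there is then a sequence $(A_m)$ in $\mathcal{G}$ such that no subsequence of $(g\circ A_m)$ converges locally uniformly on $X$. The plan is to reduce to Theorem~\ref{T: QR BBHM Extension}, applied to $g_m := g\circ A_m$ together with the continuous functions $f_{j,m} := f_j\circ A_m$ for $j=1,\dots,q_0$.

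The first step is to check the hypotheses of Theorem~\ref{T: QR BBHM Extension}. Each $A_m$ is a conformal, orientation-preserving isometry of $X$, so $g\circ A_m$ is again $K$-quasiregular (quasimeromorphic) on $X$. For each fixed $m$, the functions $g\circ A_m, f_1\circ A_m,\dots,f_{q_0}\circ A_m$ avoid each other on $X$, since $A_m$ is a bijection of $X$ and $g,f_1,\dots,f_{q_0}$ avoid each other. Next, each $\mathcal{F}_j$ is normal, so passing to a subsequence successively for $j=1,\dots,q_0$ (a finite diagonal extraction) we may assume that $f_j\circ A_m \to F_j$ locally uniformly on $X$ in the spherical metric, for every $j$. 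The limits $F_j$ must avoid each other pairwise. Here we use the uniform avoidance: there is $\delta>0$ with $\sigma(f_i(x),f_j(x))\ge\delta$ for all $x$ and $i\neq j$. Hence $\sigma(f_i(A_m x), f_j(A_m x))\ge \delta$, and passing to the limit gives $\sigma(F_i(x),F_j(x))\ge\delta>0$. This is exactly why the hypothesis asks for uniform avoidance rather than mere avoidance.

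Theorem~\ref{T: QR BBHM Extension} then gives that the subsequence $(g\circ A_m)$ is normal, so it has a further locally uniformly convergent subsequence. This contradicts the choice of $(A_m)$, and therefore $\mathcal{F}$ is normal and $g$ is a normal quasiregular mapping.

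The main obstacle is one technical point. Theorem~\ref{T: QR BBHM Extension} is stated for a domain $D\subset\mathbb{R}^n$ with functions taking finite values and Euclidean locally uniform convergence, whereas here $X\subset\mathbb{S}^n$ and the $f_j$ take values in $\mathbb{S}^n$. We handle this locally. Normality is a local property, so it suffices to prove normality near each point $x_0\in X$. We pre-compose with a Möbius map so that a small ball around $x_0$ lies in $\mathbb{R}^n$. Then, at the point $x_0$, at most one of the distinct values $F_j(x_0)$ and $F_i(x_0)$ can be $\infty$, so we can pick a point not among the $F_j(x_0)$ and post-compose everything with a spherical isometry sending that point to $\infty$. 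On a small enough ball, and for large $m$ by uniform convergence, all the $f_{j,m}$ are then finite-valued, and spherical and Euclidean uniform convergence agree there. Alternatively, one can observe that the proof of Theorem~\ref{T: QR BBHM Extension} (via Lemma~\ref{L:QR zalcmann} and Proposition~\ref{L: QR avoid-type}) only uses the limit values at the single point $x_0$ together with Lemma~\ref{L:Picard}, and is therefore local and Möbius-invariant. With either approach, the contradiction argument goes through on every such ball.
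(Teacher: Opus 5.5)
Your proposal is correct and follows essentially the same route as the paper: pass to a subsequence along which every $f_j\circ A_m$ converges, use uniform avoidance to see that the limits $F_j$ avoid each other, note that $g\circ A_m, f_1\circ A_m,\dots,f_{q_0}\circ A_m$ avoid each other, and invoke Theorem~\ref{T: QR BBHM Extension}. The contradiction framing, the explicit bound $\sigma(F_i(x),F_j(x))\ge\delta$, and the local M\"obius normalization to fit the $\mathbb{R}^n$ setting of that theorem are details the paper leaves implicit, not a different argument.
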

\begin{proof}
Given that the family
\[
\mathcal{F}_j=\{ f_j \circ A : A \in \mathcal{G} \}
\]
is normal for each $j=1,2 \dots,q_0$.\smallskip

Let $(A_n)$ be an arbitrary sequence
in $\mathcal{G}$. Since, the family $\mathcal{F}_j$ is normal for
each $j=1,2,\dots, q_0$, therefore there exists a subsequence of $(A_n)$,
which we still denote by $(A_n)$, such that the sequence
$(f_j \circ A_n)$ converges uniformly on compact subsets
of $X$ to a limit function $F_j$, for each $j=1,2,\dots, q_0$.\smallskip

Since, each $f_j$ avoids each other \emph{uniformly} on $X$, so
the limit functions $F_j$ also avoid each other on $X$,
for $j=1,2,\dots, q_0$.\smallskip

Also the functions $f_j \circ A_n$ avoid each other on $X$ for each $n$.\smallskip

Consider the sequence, $(g_n)=(g \circ A_n)$ of $K$-quasiregular mappings,
since  $g, f_1, f_2,\dots$, $f_{q_0}$
avoid each other on $X$, so $g_n, f_1 \circ A_n,
f_2 \circ A_n,\dots$, $f_{q_0} \circ A_n$ also avoid each other on $X$ for each $n$.
Hence, it follows from
Theorem~\ref{T: QR BBHM Extension}, that $(g \circ A_n)$ is normal.\smallskip

Since $A_n$ was the arbitrary sequence in $\mathcal{G}$,
it follows from Definition~\ref{D: NQM} that $g$ is
normal quasiregular mapping.
\end{proof}

\textbf{Remark:} As pointed out by Lappan in~\cite[Theorem 3]{Lappan2003}, here also
we note that the condition requiring
the functions $f_1, f_2,\dots$, $f_{q_0}$ to avoid each other uniformly on $D$
is essential. Without this assumption, it is possible that the
corresponding limit functions $L_j$ may coincide at certain points.

Analogue to Result~\ref{BBHM}, is also possible for quasiregular mapping
from $X$ into $\mathbb{R}^n$, we state it below without proof, as
one can easily obtain the proof by
using the similar methods and techniques followed in the proof of
Theorem~\ref{T: ACNQM1}.

\begin{theorem}\label{T: ACNQM2}
For $n\geq2$, let $X \subset \mathbb{S}^n$ be a domain, equipped with
the conformal metric $d_X$ and let $\mathcal{G}$ be a family of transitive, conformal,
orientation-preserving and surjective self isometries of $X$. Suppose
$f_1, f_2, \dots$, $f_{q_0}$ are
continuous functions from $X$ into $\mathbb{R}^n$, avoiding each other uniformly
on $X$, such that the family
$\mathcal{F}_j =\{ f(A(x)) - f(A(x_0)) : A \in \mathcal{G}\} \subset Q_K(X, \mathbb{R}^n)$
is normal for some $x_0 \in X$, for each $j=1,2, \dots$, $q_0=q_0(n,K)$.
Let $g: X \rightarrow \mathbb{R}^n$ be a $K$-quasiregular mapping,
with the property that these $q_0+1$ functions $g, f_1, f_2,\dots$, $f_{q_0}$
avoid each other on $X$, then $g$ is a normal quasiregular mapping.
\end{theorem}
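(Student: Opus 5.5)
We follow the proof of Theorem~\ref{T: ACNQM1}, with the additional step of normalizing by the value at $x_0$. Fix the point $x_0\in X$ for which the families $\mathcal{F}_j=\{f_j\circ A - f_j(A(x_0)) : A\in\mathcal{G}\}$ are normal. By Definition~\ref{D: NQM}$(ii)$, the goal is to show that $\{g\circ A - g(A(x_0)) : A\in\mathcal{G}\}$ is a normal family. Take an arbitrary sequence $(A_n)$ in $\mathcal{G}$ and set
\[
g_n := g\circ A_n - g(A_n(x_0)), \qquad f_{j,n} := f_j\circ A_n - g(A_n(x_0)).
\]
Subtracting the same constant $g(A_n(x_0))$ from every function preserves the avoidance hypothesis. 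So for each $n$ the $q_0+1$ functions $g_n, f_{1,n},\dots,f_{q_0,n}$ avoid each other on $X$. Each $g_n$ is $K$-quasiregular, since composition with the conformal map $A_n$ and translation do not change the distortion.

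To apply Theorem~\ref{T: QR BBHM Extension}, we need a subsequence along which each $(f_{j,n})$ converges locally uniformly (spherically) to limits $F_j$ that avoid each other. Write
\[
f_{j,n} = \bigl(f_j\circ A_n - f_j(A_n(x_0))\bigr) + \bigl(f_j(A_n(x_0)) - g(A_n(x_0))\bigr).
\]
Normality of $\mathcal{F}_j$ lets us pass to a subsequence along which the first bracket converges locally uniformly to some $\Phi_j$. We then handle the constants $c_{j,n}:=f_j(A_n(x_0))-g(A_n(x_0))$ by passing to a further subsequence so that each $c_{j,n}$ converges in $\overline{\mathbb{R}^n}$. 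If all limits are finite, then $F_j=\Phi_j+c_j$, and at each point the difference $F_j-F_k$ is the limit of $f_j\circ A_n - f_k\circ A_n$. Uniform avoidance of $f_j$ and $f_k$ keeps these differences at distance at least $\delta$, in the appropriate metric, so the $F_j$ avoid each other. Theorem~\ref{T: QR BBHM Extension}, applied on $X$ (reduced locally to balls in $\mathbb{R}^n$ by a Möbius chart if $X\ni\infty$), then shows that $(g_n)$ has a locally uniformly convergent subsequence. Since $(A_n)$ was arbitrary, $g$ is normal.

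The main obstacle is the case where some $c_{j,n}\to\infty$, where the limits $F_j$ degenerate to the constant $\infty$ and collide. We handle it by exploiting the flexibility in the choice of normalizing constant. Among the $q_0+1$ points $g(A_n(x_0)), f_1(A_n(x_0)),\dots$, at most one of the differences $c_{j,n}$ can stay bounded for many indices, because the $f_j(A_n(x_0))$ are pairwise separated. Instead of subtracting $g(A_n(x_0))$, we subtract $f_{1}(A_n(x_0))$. This keeps $f_{1,n}$ converging to $\Phi_1$, and we may also need to post-compose with a Möbius rescaling $y\mapsto y/\lambda_n$ so that the other limits remain distinct points of $\mathbb{S}^n$.

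We flag a caveat here. Uniform avoidance is presumably meant spherically, and spherical separation alone does not control Euclidean differences near $\infty$. This step may require either interpreting uniform avoidance in the Euclidean metric or reducing to Theorem~\ref{T: ACNQM1}. Normality of the $\mathbb{S}^n$-valued family then follows, and converting it back to the translated family is where the most care is needed.
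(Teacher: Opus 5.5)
\textbf{Gap: the divergent-constant case is unresolved, and it cannot be resolved because the statement is false.} Your proposal leaves open exactly the case you flag, where $c_{j,n}=f_j(A_n(x_0))-g(A_n(x_0))$ diverges. Here is a counterexample.

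Take $X=\mathbb{B}^n$ with the hyperbolic metric, and let $\mathcal{G}$ be its orientation-preserving M\"obius automorphisms. Let $H=\{y\in\mathbb{R}^n: y_1>0\}$, and let $g$ be an orientation-preserving M\"obius map of $\mathbb{B}^n$ onto $H$ (for $n=2$, $g(z)=\frac{1+z}{1-z}$). Let $f_j\equiv a_j$ for $q_0$ distinct points $a_j\in\mathbb{R}^n\setminus\overline{H}$. Every hypothesis holds:
\begin{itemize}
\item each $\mathcal{F}_j=\{0\}$ is normal;
\item the $f_j$ avoid each other uniformly in either metric;
\item $g$ is $1$-quasiregular and avoids every $f_j$.
\end{itemize}
If you want non-constant $f_j$, then for $n=2$ the choice $f_j(z)=z-3j$ works equally well.

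Now put $A_k=g^{-1}\circ(k\,g)$. This lies in $\mathcal{G}$ because $y\mapsto ky$ preserves $H$. Then
\[
g\circ A_k-g(A_k(0))=k\,\bigl(g-g(0)\bigr)\qquad\Bigl(=\frac{2kz}{1-z}\ \text{for } n=2\Bigr).
\]
This vanishes at $0$ and tends to $\infty$ at every other point, since $g$ is injective. So no subsequence converges locally uniformly near $0$. For any other base point $x_0$, precompose with some $T\in\mathcal{G}$ with $T(x_0)=0$ and the same happens. Hence $g$ is not normal in the sense of Definition~\ref{D: NQM}$(ii)$. For $n=2$, $K=1$, this is the familiar fact that omitted values cannot force the Bloch condition.

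In your notation, $g(A_k(0))=k\,g(0)\to\infty$ while the $f_j(A_k(0))$ stay fixed. So every $c_{j,k}\to\infty$ and all the limits $F_j$ collapse to $\infty$. Renormalizing by $f_1(A_k(x_0))=a_1$ does give convergence of $g\circ A_k-a_1$, but to the constant $\infty$. No rescaling can transfer that back to the family $\{g\circ A-g(A(x_0))\}$. Separately, your claim that at most one $c_{j,n}$ can stay bounded is wrong: pairwise separated points can all lie within bounded distance of $g(A_n(x_0))$.

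\textbf{What your approach does prove.} It gives the weaker conclusion of normality in the sense of Definition~\ref{D: NQM}$(i)$, in two steps.
\begin{itemize}
\item Normality of $\mathcal{F}_j$ implies that $\{f_j\circ A : A\in\mathcal{G}\}$ is normal in $C(X,\mathbb{S}^n)$. Indeed, the limit of $f_j\circ A_n-f_j(A_n(x_0))$ vanishes at $x_0$. So it is not identically $\infty$, and by Lemma~\ref{L: Hurwitz} (after a M\"obius change of coordinates) it is finite and locally bounded. The constants $f_j(A_n(x_0))$ converge in $\overline{\mathbb{R}^n}$ along a subsequence.
\item The argument of Theorem~\ref{T: ACNQM1}, using spherical uniform avoidance, then shows that $\{g\circ A\}$ is normal.
\end{itemize}

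The paper gives no proof of this theorem beyond saying it follows by the methods of Theorem~\ref{T: ACNQM1}. That remark overlooks exactly the degenerate case you identified.
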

\section*{Acknowledgements}

The third author gratefully acknowledges the
fellowship received from CSIR-HRDG, New Delhi, India,
through a Junior Research Fellowship (File No.\ 09/0045(21572)/2025-EMR-I).
The authors are also thankful to Prof.\ Sanjay Kumar Pant
for his valuable discussions and suggestions.

\section*{Declarations}

\textbf{Author Contributions:}
All authors contributed equally to this work.
All authors wrote the main manuscript and reviewed the final version.

\textbf{Data Availability:}
Data sharing is not applicable to this article,
as no datasets were generated or analysed during the current study.

\textbf{Ethical Approval:}
This article does not involve any studies with human participants
or animals and therefore ethical approval is not required.

\textbf{Conflict of Interest:}
The authors declare that there is no conflict of interest.

\textbf{Competing Interests:}
The authors declare no competing interests.

\end{document}